\providecommand{\MSC}[1]{\textbf{\textit{MSC---}} #1}
\newtheorem{defi}{Definition}[section]
\newtheorem{thm}[defi]{Theorem}
\newtheorem{coro}[defi]{Corollary}
\newtheorem{lem}[defi]{Lemma}
\newtheorem{pro}[defi]{Proposition}
\newtheorem{prob}[defi]{Problem}
\title{Hopf-Galois module structure of \\ monogenic orders in cubic number fields}
\author[1,2]{Daniel Gil-Muñoz}
\affil[1]{Department of Algebra, Faculty of Mathematics and Physics, Charles University,
Sokolovsk\'{a} 83, 186 00 Praha 8, Czech Republic\bigskip}
\affil[2]{Dipartimento di Matematica, Università di Pisa, Largo B. Pontecorvo, 5, 56127 Pisa, Italy\bigskip}
\date{}
\begin{document}
\maketitle

\begin{abstract}
For a cubic number field $L$, we consider the $\mathbb{Z}$-order in $L$ of the form $\mathbb{Z}[\alpha]$, where $\alpha$ is a root of a polynomial of the form $x^3-ax+b$ and $a,b\in\mathbb{Z}$ are integers such that $v_p(a)\leq 2$ or $v_p(b)\leq 3$ for all prime numbers $p$. We characterize the freeness of $\mathbb{Z}[\alpha]$ as a module over its associated order in the unique Hopf-Galois structure $H$ on $L$ in terms of the solvability of at least one between two generalized Pell equations in terms of $a$ and $b$. We determine when the equality $\mathcal{O}_L=\mathbb{Z}[\alpha]$ is satisfied in terms of congruence conditions for $a$ and $b$. For such cases, we specialize our result so as to obtain criteria for the freeness of $\mathcal{O}_L$ as a module over its associated order in $H$.
\end{abstract}

\MSC{11A05,11A55,11R04,11R16,11R32,11R33,11Y40,12F10,16T05}

\section{Introduction}

Galois module theory concerns the structure of the ring of integers of a number or a $p$-adic field as a module over some object depending on a Galois action. The starting point of Galois module theory is the normal basis theorem, which asserts that a Galois field extension admits a basis formed by the conjugates of a single element, which is a so called normal basis. For a Galois number field $L$ with group $G$, this can be rephrased by saying that $L$ is $\mathbb{Q}[G]$-free of rank one. 

A more subtle question asks whether $L$ admits some normal basis which simultaneously is integral, i.e. a normal integral basis. This condition is equivalent to saying that the ring of integers $\mathcal{O}_L$ of $L$ is $\mathbb{Z}[G]$-free of rank one. The first major result in this direction is the Hilbert-Speiser theorem, which ensures that $L$ admits some normal integral basis if and only if $L/\mathbb{Q}$ is tamely ramified. This also implies the non-existence of a normal integral basis on wildly ramified extensions. In that case, the ground ring for the module structure of the ring of integers is replaced by another one. Namely, instead of $\mathbb{Z}[G]$ we use $$\mathfrak{A}_{L/\mathbb{Q}}=\{\lambda\in\mathbb{Q}[G]\,|\,\lambda\cdot x\in\mathcal{O}_L\hbox{ for all }x\in\mathcal{O}_L\},$$ which is called the associated order of $\mathcal{O}_L$ in $\mathbb{Q}[G]$. It trivially endows $\mathcal{O}_L$ with module structure. 

It is not difficult to check that the associated order $\mathfrak{A}_{L/\mathbb{Q}}$ of $\mathcal{O}_L$ is the only $\mathbb{Z}$-order over which $\mathcal{O}_L$ is possibly free. However, $\mathcal{O}_L$ is not $\mathfrak{A}_{L/\mathbb{Q}}$-free in general. The problem then turns to find criteria to characterize the freeness of $\mathcal{O}_L$ as a module over the associated order. Leopoldt \cite{leopoldt} proved that if the number field $L$ is abelian, then $\mathcal{O}_L$ is $\mathfrak{A}_{L/\mathbb{Q}}$-free of rank one. Some other works \cite{berge1972,ferri2024,martinet1972} have found characterizations of freeness for classes of non-abelian number fields, even though a general answer is currently out of reach.

The picture of Galois module theory can be broadened by means of Hopf-Galois theory, which is a generalization of Galois theory with the use of Hopf algebras. The basic notion in this approach is the one of Hopf-Galois structure on a finite field extension $L/K$, a pair formed by a $K$-Hopf algebra $H$ and a $K$-linear action $\cdot\colon H\otimes_KL\longrightarrow L$ such that $L$ is $H$-module algebra with respect to $\cdot$ and the canonical map $L\otimes_KH\longrightarrow\mathrm{End}_K(L)$ is a $K$-linear isomorphism. A Hopf-Galois extension is nothing but a field extension admitting some Hopf-Galois structure. With this definition, every Galois extension is Hopf-Galois; indeed, calling $G\coloneqq\mathrm{Gal}(L/K)$, $K[G]$ together with its action by endomorphisms on $L$ is a Hopf-Galois structure on $L/K$. The beginning of Hopf-Galois theory traces back to the book \cite{chasesweedler} by Chase and Sweedler.

Now, let $L$ be a number field such that $L/\mathbb{Q}$ is Hopf-Galois and let $H$ be a Hopf-Galois structure on $L/\mathbb{Q}$. In short, we will say that $L$ is an $H$-Galois number field. The associated order of $\mathcal{O}_L$ in $H$ is defined as the set $$\mathfrak{A}_H=\{\lambda\in H\,|\,\lambda\cdot x\in\mathcal{O}_L\hbox{ for all }x\in\mathcal{O}_L\}.$$ This is a natural generalization of the notion of associated order from the Galois case. In fact, it also endows $\mathcal{O}_L$ with $\mathfrak{A}_H$-module structure, and it makes sense to ask whether $\mathcal{O}_L$ is $\mathfrak{A}_H$-free of rank one.

Thus, we have the more general question: given a Hopf-Galois structure $H$ on an extension $L/K$, is $\mathcal{O}_L$ free as an $\mathfrak{A}_H$-module? The first time that this problem was addressed for non classical Hopf-Galois structures on extensions number fields was due to Truman \cite{truman2012}, who found criteria for the $\mathfrak{A}_H$-freeness of $\mathcal{O}_L$ in the case that $L$ is a tamely ramified biquadratic number field. His strategy consist roughly in studying the local freeness of the ring of integers (that is, the freeness of the completion of the ring of integers at each rational prime over the completion of the associated order in a Hopf-Galois structure) and using this to extract criteria for the freeness by means of the theory of id\`eles. 

In \cite{gilrioinduced}, Rio and the author introduced a completely different strategy to study the $\mathfrak{A}_H$-module structure of $\mathcal{O}_L$. Let $n$ be the degree of $L$. The idea is that from the knowledge of the action of a $\mathbb{Q}$-basis $W$ of $H$ on an integral basis $B$ of $L$, one can build a $n^2\times n$ matrix $M(H_W,L_B)$ that can be transformed by means of integral linear transformations to a square matrix. If $M(H_W,L_B)$ has integer coefficients, this amounts to finding its Hermite normal form. The determinant of such a square matrix is the module index $[\mathfrak{A}_H:\mathfrak{H}]$, where $\mathfrak{H}$ is the $\mathbb{Z}$-lattice generated by $W$. Now, by analyzing the action of $W$ on a given element $\beta\in\mathcal{O}_L$, one can also obtain the module index $[\mathcal{O}_L:\mathfrak{A}_H\cdot\beta]$, and then obtain criteria on whether $\beta$ is an $\mathfrak{A}_H$-free generator of $\mathcal{O}_L$.

This method was successfully applied in \cite{gilrioquartic} so as to determine a characterization for the freeness of $\mathcal{O}_L$ in the case of arbitrary quartic Galois number fields, extending the results by Truman. It also plays a role in \cite{gilkummer}, where the author found sufficient conditions for subfamilies of radical number fields of the form $L=\mathbb{Q}(\sqrt[n]{a})$.

It is actually possible to consider a more general version of the aforementioned problem, working with a $\mathbb{Z}$-order $\mathcal{O}$ of an $H$-Galois number field $L$ rather than just the ring of integers. There is also a notion of associated order $\mathfrak{A}_H(\mathcal{O})$ in $H$ satisfying analogous properties as the usual associated order, and then one may wonder whether $\mathcal{O}$ is $\mathfrak{A}_H(\mathcal{O})$-free. We shall adapt the method by Rio and the author to this more general case, giving rise to a strategy to obtain criteria for the freeness of a $\mathbb{Z}$-order. In this case, the input is the action of a $\mathbb{Q}$-basis $W$ of $H$ on a $\mathbb{Z}$-basis $B$ of $\mathcal{O}$, and the Hermite normal form of the arising matrix $M(H_W,L_B)$ is determined.

In this paper, we consider cubic number fields $L$, which are known to be Hopf-Galois and admit a unique Hopf-Galois structure $H$. By using the generalizarion of the already mentioned method, we shall determine criteria for the $\mathfrak{A}_H(\mathbb{Z}[\alpha])$-freeness of the monogenic $\mathbb{Z}$-orders $\mathbb{Z}[\alpha]$ in $\mathcal{O}_L$, where $\alpha$ is a primitive element for $L/\mathbb{Q}$ with minimal polynomial of the form $$f(x)=x^3-ax+b,\quad a,b\in\mathbb{Z},\quad v_p(a)\leq2\hbox{ or }v_p(b)\leq3\quad\hbox{for every prime }p.$$ Every cubic number field $L$ admits an element $\alpha$ with this property. Our main result is the following:

\begin{thm}\label{thm:maintheorem} Let $L=\mathbb{Q}(\alpha)$, where $\alpha$ is a root of a polynomial as above. Write $\Delta=4a^3-27b^2$ for the discriminant of $f$ and call $g=\gcd(a,b)$. Let $H$ be the only Hopf-Galois structure on $L/\mathbb{Q}$. The $\mathbb{Z}$-order $\mathbb{Z}[\alpha]$ of $L$ is $\mathfrak{A}_H(\mathbb{Z}[\alpha])$-free if and only if there exist integers $x,y\in\mathbb{Z}$ such that:
\begin{enumerate}
    \item $x^2+3\Delta y^2=\pm12ag$, $6a\mid 9by+x$ or $9by-x$ and $3\nmid y$, if $3\nmid a$.
    \item $x^2+3\Delta y^2=\pm36ag$ and $6a\mid 9by+x$ or $9by-x$, if $3\mid a$ and $v_3(a)\leq v_3(b)$.
    \item $x^2+3\Delta y^2=\pm108ag$ and $6a\mid 9by+x$ or $9by-x$, if $v_3(a)>v_3(b)$.
\end{enumerate}
\end{thm}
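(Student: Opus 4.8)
The plan is to apply the matrix method for $\mathbb{Z}$-orders sketched in the introduction, specialized to the cubic case $n=3$, where the relevant matrices have size $9\times 3$. There are three stages: make the unique Hopf-Galois structure $H$ and its action on $L$ fully explicit, extract $\mathfrak{A}_H(\mathbb{Z}[\alpha])$ and the lattice index from a Hermite normal form, and finally translate the existence of a free generator into the stated Pell-type conditions.

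First I would fix a concrete model for $H$. Writing $\alpha=\alpha_1,\alpha_2,\alpha_3$ for the roots of $f$ and $\widetilde{L}=L(\sqrt{\Delta})$ for the Galois closure with $\mathrm{Gal}(\widetilde{L}/\mathbb{Q})\cong S_3=\langle\sigma,\tau\rangle$, Greither–Pareigis theory realizes $H$ as the descent $(\widetilde{L}[N])^{S_3}$ of the regular normal subgroup $N=\langle\sigma\rangle\cong C_3$. A natural $\mathbb{Q}$-basis is $W=\{1,\,w,\,w'\}$ with $w=\sigma+\sigma^2$ and $w'=\sqrt{\Delta}\,(\sigma-\sigma^2)$, both fixed by $\tau$. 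Using $\alpha_1+\alpha_2+\alpha_3=0$, the action on the $\mathbb{Z}$-basis $B=\{1,\alpha,\alpha^2\}$ of $\mathbb{Z}[\alpha]$ is then computable in closed form in terms of $a$ and $b$ (for instance $w\cdot\alpha=-\alpha$ and $w'\cdot\alpha=\sqrt{\Delta}(\alpha_2-\alpha_3)$, which one checks lies in $L$). Stacking these three $3\times 3$ rational blocks gives the matrix $M(H_W,L_B)$.

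Next I would compute the Hermite normal form of a suitable integral rescaling of $M(H_W,L_B)$. Its determinant is the index $[\mathfrak{A}_H(\mathbb{Z}[\alpha]):\mathfrak{H}]$, and the reduction simultaneously produces an explicit $\mathbb{Z}$-basis of the associated order. Here the prime $3$ forces the trichotomy of the statement: the denominators occurring in the action are powers of $3$ governed by $v_3(a)$ and by the comparison of $v_3(a)$ with $v_3(b)$, matching exactly the cases $3\nmid a$, $3\mid a$ with $v_3(a)\le v_3(b)$, and $v_3(a)>v_3(b)$. Carrying out the reduction separately in each case is where the normalizing constants $12$, $36$, $108$ on the right-hand sides of the Pell equations will emerge.

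Finally, to decide freeness I would analyse the action of $\mathfrak{A}_H(\mathbb{Z}[\alpha])$ on an arbitrary $\beta=x_0+x_1\alpha+x_2\alpha^2$. The order is free of rank one exactly when some $\beta$ satisfies $[\mathbb{Z}[\alpha]:\mathfrak{A}_H(\mathbb{Z}[\alpha])\cdot\beta]=1$, and this module index is computed as a determinant that reduces to a binary quadratic form in two of the coordinates, accounting for the shape $x^2+3\Delta y^2$. Equating it to the unit index and simplifying against the divisibility relations read off from the Hermite basis yields $x^2+3\Delta y^2=\pm\,c\,ag$ together with the congruences $6a\mid 9by+x$ or $9by-x$ (and, when $3\nmid a$, the extra constraint $3\nmid y$); the two admissible signs produce the two generalized Pell equations of the abstract, of which one need only be solvable. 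I expect the main obstacle to be this last translation: proving that the determinant condition is not merely necessary but sufficient for a generator to exist, which requires showing that the congruence constraints coming from the Hermite basis can always be satisfied simultaneously with the quadratic one. Keeping the three $3$-adic cases aligned throughout, so that the constants $12$, $36$, $108$ and the associated congruences are produced correctly, will be the most delicate bookkeeping.
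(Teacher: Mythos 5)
Your proposal follows essentially the same route as the paper: the same Greither--Pareigis basis $\{1,\sigma+\sigma^2,\sqrt{\Delta}(\sigma-\sigma^2)\}$, the same $9\times 3$ action matrix reduced to Hermite normal form to obtain $\mathfrak{A}_H(\mathbb{Z}[\alpha])$ and the index $I_W(\mathbb{Z}[\alpha])$, and the same final step equating $\lvert\det M_\beta\rvert$ with that index, which factors as a linear form times a binary quadratic form whose discriminant condition yields the Pell-type equations and congruences. One small correction of emphasis: the matrix $M(H_W,L_B)$ actually has integer entries, so the trichotomy and the constants $12$, $36$, $108$ arise not from denominators in the action but from the gcd computations (via Euclid's algorithm on pairs such as $(2a^2,6a)$ and $(9b,6a)$, where comparisons of both $v_2$ and $v_3$ of $a$ and $b$ enter) during the Hermite reduction --- exactly as in the paper's Sections 4 and 6.
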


Given an element $\alpha$ as above, the equality $\mathcal{O}_L=\mathbb{Z}[\alpha]$ can be characterized in terms of congruence conditions on $a$ and $b$. This can be deduced from the classification of an integral basis for a cubic number field due to Alaca \cite{alaca}. When such conditions are satisfied, Theorem \ref{thm:maintheorem} provides criteria for the ring of integers $\mathcal{O}_L$ of $L$ as a module over its associated order $\mathfrak{A}_H$. The criteria obtained for this case is stated in Corollary \ref{coro:freenesscubic}. Recall that $L/\mathbb{Q}$ is Galois if and only if the discriminant $\Delta=4a^3-27b^2$ is a square, which is not typically satisfied even if $\mathcal{O}_L=\mathbb{Z}[\alpha]$. Thus, this is the first time that criteria for the freeness on non-Galois cubic number fields is provided.

This paper is organized as follows. In Section \ref{sec:redmethod}, we prove the validity of the generalization of the method by Rio and the author to the case of $\mathbb{Z}$-orders in a number field $L$. Since we are considering cubic fields whose primitive element is a root of a trinomial, we will need to compute Hermite normal forms of matrices with two integer parameters on the same column, and how this affects the entries in the next column. In Section \ref{sec:euclidalg}, we find an answer to this problem by applying Euclid algorithm on the entries in the same column, as well as the continued fraction expansion of their quotient. 

We shall introduce the setting we use for cubic number fields in Section \ref{sec:cubicfields}, where we also provide a $\mathbb{Q}$-basis $W$ for its unique Hopf-Galois structure $H$. In Section \ref{sec:assocorders}, we carry out the application of the method from Section \ref{sec:redmethod} for the $\mathbb{Z}$-orders in $\mathbb{Z}[\alpha]$ with $W$ and the $\mathbb{Z}$-basis $B=\{1,\alpha,\alpha^2\}$ of $\mathbb{Z}[\alpha]$. We shall calculate the Hermite normal form of the matrix $M(H_W,L_B)$, and we find a $\mathbb{Z}$-basis for $\mathfrak{A}_H(\mathbb{Z}[\alpha])$ at each of the cases in Theorem \ref{thm:maintheorem}. In Section \ref{sec:freeness}, we address the problem of the freeness of $\mathbb{Z}[\alpha]$ as an $\mathfrak{A}_H(\mathbb{Z}[\alpha])$-module, proving Theorem \ref{thm:maintheorem}. Finally, we specialize to the cases in which $\mathcal{O}_L=\mathbb{Z}[\alpha]$.

\section{Preliminaries}

\subsection{Hopf-Galois extensions and Hopf-Galois structures}\label{sec:prelimhg}

A number field $L$ is said to be \textbf{Hopf-Galois} if there is a finite dimensional cocommutative $\mathbb{Q}$-Hopf algebra $H$ and a $\mathbb{Q}$-linear action $\cdot\colon H\otimes_{\mathbb{Q}}L\longrightarrow L$ such that:
\begin{itemize}
    \item $L$ is an $H$-module algebra with respect to $\cdot$ (see \cite[Chapter 2, Section 2.1, Page 39]{underwood2015}).
    \item The map $\iota\colon L\otimes_{\mathbb{Q}}H\longrightarrow\mathrm{End}_{\mathbb{Q}}(L)$ defined by $\iota(x\otimes h)(y)=x(h\cdot y)$ is a $\mathbb{Q}$-linear isomorphism.
\end{itemize}

In this situation, the pair $(H,\cdot)$ is called a \textbf{Hopf-Galois structure} on $L$, and we also say that $L$ is $H$-Galois. If $L$ is a Galois extension of $\mathbb{Q}$ with Galois group $G$, then $\mathbb{Q}[G]$ is naturally endowed with $\mathbb{Q}$-Hopf algebra structure, and together with its classical action on $L$ it gives rise to a Hopf-Galois structure on $L$, which is called the classical Galois structure.

We can determine all Hopf-Galois structures on a number field $L$ thank to Greither-Pareigis theory (see \cite{greitherpareigis}). Let $\widetilde{L}$ be the normal closure of $L$, $G=\mathrm{Gal}(\widetilde{L}/\mathbb{Q})$ and $G'=\mathrm{Gal}(\widetilde{L}/L)$. Let $X=G/G'$ be the set of left cosets of $G$ over $G'$.

\begin{thm}[Greither-Pareigis] The Hopf-Galois structures on $L$ are in one to one correspondence with the regular and $G$-stable subgroups of $\mathrm{Perm}(X)$.
\end{thm}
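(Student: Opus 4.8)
The plan is to prove the correspondence by \emph{Galois descent} along the Galois extension $\widetilde{L}/\mathbb{Q}$, reducing the classification of Hopf-Galois structures on $L$ to the much simpler classification over the split base $\widetilde{L}$. First I would record the separability input: since $L/\mathbb{Q}$ is separable, base change produces an isomorphism of $\widetilde{L}$-algebras $\widetilde{L}\otimes_{\mathbb{Q}}L\cong\mathrm{Map}(X,\widetilde{L})=\widetilde{L}^X$, where the factors are indexed by the $\mathbb{Q}$-embeddings $L\hookrightarrow\widetilde{L}$, which in turn are identified with $X=G/G'$. Under this isomorphism the natural semilinear action of $G$ on the left tensor factor becomes the action on $\widetilde{L}^X$ given by $(g\cdot f)(x)=g\big(f(\lambda(g)^{-1}x)\big)$, where $\lambda\colon G\to\mathrm{Perm}(X)$ is the homomorphism induced by left translation on cosets. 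The same base change turns a $\mathbb{Q}$-Hopf algebra $H$ acting on $L$ into an $\widetilde{L}$-Hopf algebra $\widetilde{L}\otimes_{\mathbb{Q}}H$ acting on $\widetilde{L}^X$, now equipped with a compatible semilinear $G$-action (the descent datum).

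The core of the argument is the classification of Hopf-Galois structures on the split algebra $\widetilde{L}^X$ over $\widetilde{L}$. Here I would show that every such structure arises from a subgroup $N\le\mathrm{Perm}(X)$ acting \emph{regularly} (that is, simply transitively) on $X$: the corresponding Hopf algebra is the group algebra $\widetilde{L}[N]$, acting on $f\in\widetilde{L}^X$ by $(\eta\cdot f)(x)=f(\eta^{-1}x)$, and one checks directly that regularity of $N$ is exactly what makes the canonical map $\widetilde{L}^X\otimes_{\widetilde{L}}\widetilde{L}[N]\to\mathrm{End}_{\widetilde{L}}(\widetilde{L}^X)$ bijective, so that $\widetilde{L}^X$ becomes $\widetilde{L}[N]$-Galois. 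Conversely, over the split base a cocommutative Hopf algebra endowing $\widetilde{L}^X$ with a Galois structure is forced to be a group algebra whose group acts regularly, which yields the bijection at the level of $\widetilde{L}$.

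Finally I would descend: a structure $\widetilde{L}[N]$ comes, by faithfully flat descent, from a $\mathbb{Q}$-Hopf algebra acting on $L$ precisely when the semilinear $G$-action stabilizes the subspace $\widetilde{L}[N]\subseteq\mathrm{End}_{\widetilde{L}}(\widetilde{L}^X)$ together with its multiplicative and comultiplicative structure. Computing the action of $g\in G$ on a permutation $\eta\in N\subseteq\mathrm{Perm}(X)$ shows that $g$ sends $\eta$ to its conjugate $\lambda(g)\,\eta\,\lambda(g)^{-1}$; hence the descent datum exists if and only if $\lambda(g)N\lambda(g)^{-1}=N$ for all $g\in G$, i.e. iff $N$ is normalized by $\lambda(G)$, which is precisely the \emph{$G$-stability} condition. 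The descended Hopf algebra is then $H=\big(\widetilde{L}[N]\big)^G$ for this twisted action, and tracing the descent in both directions gives the claimed one-to-one correspondence.

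I expect the main obstacle to lie in two technical points underlying the ``over $\widetilde{L}$'' step: verifying that faithfully flat (Galois) descent genuinely applies to the whole package of a Hopf algebra \emph{together with} its module-algebra action and the Galois isomorphism $\iota$, so that descended data reconstitute a bona fide Hopf-Galois structure on $L$; and proving the structure theorem that on the split algebra $\widetilde{L}^X$ there are no Hopf-Galois structures beyond the group-algebra ones attached to regular subgroups. Once these are in place, the identification of the stability condition with normalization by $\lambda(G)$ is a direct conjugation computation. Since for the purposes of this paper the theorem is quoted from \cite{greitherpareigis}, I would only need the correspondence itself and not a full re-derivation of the descent formalism.
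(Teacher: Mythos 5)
There is nothing in the paper to compare your argument against: the theorem is quoted as a known result and its proof is deferred entirely to the cited reference \cite{greitherpareigis}. What you have written is, in outline, precisely the original Greither--Pareigis descent proof (also presented in Childs' book), so your route is the canonical one rather than a different one. Your three steps are correct and in the right order: (i) base change along $\widetilde{L}/\mathbb{Q}$ identifies $\widetilde{L}\otimes_{\mathbb{Q}}L$ with the split algebra $\widetilde{L}^X$ carrying the twisted semilinear action $(g\cdot f)(x)=g\bigl(f(\lambda(g)^{-1}x)\bigr)$; (ii) Hopf--Galois structures on $\widetilde{L}^X/\widetilde{L}$ are exactly the group algebras $\widetilde{L}[N]$ of regular subgroups $N\leq\mathrm{Perm}(X)$; (iii) descent theory shows the structures defined over $\mathbb{Q}$ are exactly those for which $N$ is normalized by $\lambda(G)$, with descended Hopf algebra $\widetilde{L}[N]^G$ --- matching the formula the paper records immediately after the theorem. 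Two remarks on the obstacles you flag, which are indeed where all the substance lies. First, the split-base classification (ii) is usually proved via the torsor reformulation: the Galois isomorphism dualizes to an isomorphism of $E$-algebras $E\otimes_{\widetilde{L}}E\cong E\otimes_{\widetilde{L}}H^{*}$ with $E=\widetilde{L}^X$, which forces $H^{*}$ to be commutative (so cocommutativity is a consequence of the Galois condition on a commutative algebra, not a hypothesis you need to invoke), and then exhibits $\mathrm{Spec}(E)$ as a torsor under the finite group scheme $\mathrm{Spec}(H^{*})$; since this torsor has a rational point it trivializes, and a finite group scheme isomorphic as a scheme to a constant scheme is itself constant, giving $H=\widetilde{L}[N]$ with $N$ acting regularly on $X$. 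Second, faithfully flat (Galois) descent is an equivalence of tensor categories, so it transports the entire package --- algebra, Hopf algebra, module-algebra action, and the isomorphism $\iota$ --- in both directions; this is what makes the correspondence a genuine bijection rather than merely a construction. With those two lemmas filled in, your conjugation computation identifying the descent condition with $G$-stability is exactly right, and your sketch is a faithful reconstruction of the proof the paper chose to cite rather than reproduce.
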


A subgroup $N\leq\mathrm{Perm}(X)$ is said to be regular if its action on $X$ is simply transitive (in particular, $|N|=|X|=[L:\mathbb{Q}]$). On the other hand, $N$ is $G$-stable if $\lambda(g)N\lambda(g^{-1})\subset N$ for all $g\in G$, where $\lambda\colon G\longrightarrow\mathrm{Perm}(X)$ is defined by $\lambda(g)(\overline{h})=\overline{gh}$. Given a regular and $G$-stable subgroup $N$ of $\mathrm{Perm}(X)$, its corresponding Hopf-Galois structure on $L$ has underlying Hopf algebra $$H=\widetilde{L}[N]^G=\{h\in\widetilde{L}[N]\,\mid\,g(h)=h\hbox{ for all }g\in G\},$$ where the action of $G$ on $\widetilde{L}[N]$ is given by the classical action on $\widetilde{L}$ and by conjugation with $\lambda(G)$ on $N$. The action of $H$ on $L$ is determined as follows: if $h=\sum_{i=1}^nh_i\eta_i\in H$ and $x\in L$, then \begin{equation}\label{eq:hopfaction}
h\cdot x=\sum_{i=1}^nh_i\eta_i^{-1}(\overline{1_G})(x).
\end{equation}

We say that the number field $L$ is almost classically Galois if $G'$ admits a normal complement $J$ within $G$, that is, $G=J\rtimes G'$ with $J$ a normal subgroup of $G$. In that case, $L$ is Hopf-Galois and a regular and $G$-stable subgroup of $\mathrm{Perm}(G)$ is given by $N=\lambda(J)$. Suppose that $J$ is abelian. Calling $M=\widetilde{L}^J$, the Hopf-Galois structure on $L/K$ corresponding to this subgroup is given by $H=M[\lambda(J)]^{G'}$, where the action is similar to the one above (see \cite[Proposition 2.9]{gilkummer}).

\subsection{Module structure of $\mathbb{Z}$-orders in a number field}

The classical problem in Galois module theory is to study the freeness of the ring of integers in a Galois number field as a module over its associated order in the Galois group algebra, defined as the maximal $\mathbb{Z}$-order in the Galois group algebra acting on the ring of integers. More generally, for a Hopf-Galois number field, the notion of associated order in a Hopf-Galois structure is naturally introduced, and there is the more general problem of studying the module structure of the ring of integers in such an associated order. We will work with a more general situation, consisting in considering $\mathbb{Z}$-orders of a Hopf-Galois number field, not just the ring of integers, and their module structure over a generalized notion of associated order. We will follow \cite{johnston2011}.

Let $K$ be a field which is the quotient field of a noetherian integral domain $R$. By \textbf{$R$-lattice} in a finite-dimensional $K$-algebra $A$ we mean a finitely generated $R$-submodule $M$ in $A$ such that $M$ contains a $K$-basis for $A$. Equivalently, an $R$-lattice in $A$ is a free finitely generated $R$-submodule of rank $n$ of $A$, where $n=\mathrm{dim}_K(A)$. An \textbf{$R$-order} in $A$ is an $R$-lattice of $A$ which in addition is a unitary subring of $A$. Given an $R$-lattice $M$ of $A$, the left order of $M$ in $A$, defined as $$\mathfrak{A}_l(A;M)=\{x\in A\,\mid\,x\cdot M\subseteq M\},$$ is an $R$-order of $A$ (see \cite[Proposition 3.12]{johnston2011}).

Now, let $L$ be an $H$-Galois number field and let $\mathcal{O}$ be a $\mathbb{Z}$-order of $L$. By \cite[(2.16)]{childs}, $L$ is free as an $H$-module of rank one, or equivalently, $L$ is isomorphic to $H$ as an $H$-module (note that this is a generalization of the normal basis theorem for Hopf-Galois extensions). Therefore, we can identify $\mathcal{O}$ with a $\mathbb{Z}$-lattice in $H$ and consider the left order $$\mathfrak{A}_H(\mathcal{O})\coloneqq\mathfrak{A}_l(H;\mathcal{O})=\{h\in H\,\mid\,h\cdot\mathcal{O}\subseteq\mathcal{O}\}.$$ This is a $\mathbb{Z}$-order of $A$. Arguing as in \cite[(12.5)]{childs}, one proves that if $\mathfrak{A}$ is a $\mathbb{Z}$-order in $H$ such that $\mathcal{O}$ is $\mathfrak{A}$-free, then $\mathfrak{A}=\mathfrak{A}_H(\mathcal{O})$.

If $\mathcal{O}=\mathcal{O}_L$ is the ring of integers of $L$, then $\mathfrak{A}_H(\mathcal{O}_L)\equiv\mathfrak{A}_H$ is called the associated order in $H$, and this is the usual object of study in Hopf-Galois module theory. In analogy, we will refer to $\mathfrak{A}_H(\mathcal{O})$ as the associated order of $\mathcal{O}$ in $H$. We consider the problem of finding a $\mathbb{Z}$-basis of $\mathfrak{A}_H(\mathcal{O})$ and a necessary and sufficient condition for $\mathcal{O}$ to be $\mathfrak{A}_H(\mathcal{O})$-free.

\section{An effective method to study the freeness of a $\mathbb{Z}$-order}\label{sec:redmethod}


Let $L$ be an $H$-Galois number field. Rio and the author \cite{gilrioquartic,gilrioinduced} introduced a method to study the $\mathfrak{A}_H$-freeness of the ring of integers $\mathcal{O}_L$ in $L$, which is well suited for extensions of low degree. In what follows we proceed to extend it to study the $\mathfrak{A}_H(\mathcal{O})$-freeness of a $\mathbb{Z}$-order $\mathcal{O}$ of $L$. By simplicity, we will work with extensions of number fields whose ground field is $\mathbb{Q}$, but the same development holds when the ground field is any number field $K$ whose ring of integers $\mathcal{O}_K$ is a PID.

Fix a $\mathbb{Q}$-basis $W=\{w_i\}_{i=1}^n$ of $H$ and a $\mathbb{Q}$-basis $B=\{\gamma_j\}_{j=1}^n$ of $L$. For each $1\leq i,j\leq n$, write $$w_i\cdot\gamma_j=\sum_{k=1}^nm_{ij}^{(k)}(H_W,L_B)\gamma_j,\quad m_{ij}^{(k)}(H_W,L_B)\in \mathbb{Q}.$$ Given $1\leq j\leq n$, denote $$M_j(H_W,L_B)=\begin{pmatrix}
|& | &\dots  &|  \\
w_1\cdot\gamma_j&w_2\cdot\gamma_j&\dots &w_n\cdot\gamma_j \\
|& |&\dots & |\\
\end{pmatrix}\in \mathcal{M}_n(\mathbb{Q}),$$ and let $$M(H_W,L_B)=\begin{pmatrix}
\\[-1ex]
M_1(H,L) \\[1ex] 
\hline\\[-1ex]
\cdots \\[1ex]
\hline \\[-1ex]
M_n(H,L)\\
\\[-1ex]
\end{pmatrix}\in \mathcal{M}_{n^2\times n}(\mathbb{Q}).$$ 

\begin{pro} There is a unimodular matrix $U\in\mathrm{GL}_{n^2}(\mathbb{Z})$ and a matrix $D\in\mathcal{M}_n(\mathbb{Q})$ such that \begin{equation}\label{eq:reducingM}
    UM(H_W,L_B)=\begin{pmatrix}D \\ \hline \\[-2ex] O\end{pmatrix},
\end{equation} where $O$ is the zero matrix of $\mathcal{M}_{(n^2-n)\times n}(\mathbb{Q})$.
\end{pro}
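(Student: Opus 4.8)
The plan is to reduce the statement to the existence of the Hermite normal form for integer matrices. The only genuine obstruction to applying that classical result directly is that $M(H_W,L_B)$ has rational rather than integral entries, so I will first clear denominators, apply an integral row reduction, and then transport it back.

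First I would fix a common denominator: choose $d\in\mathbb{Z}_{>0}$ (for instance the least common multiple of the denominators of all the $m_{ij}^{(k)}(H_W,L_B)$) so that $A\coloneqq d\cdot M(H_W,L_B)$ lies in $\mathcal{M}_{n^2\times n}(\mathbb{Z})$. Since $A$ has only $n$ columns, its rank over $\mathbb{Q}$ is automatically at most $n$; this is the crucial feature that will force the zero block. Next I invoke the existence of the (row) Hermite normal form for integer matrices: there is a unimodular $U\in\mathrm{GL}_{n^2}(\mathbb{Z})$ such that $UA$ carries its nonzero rows at the top and all remaining rows equal to zero. As the number of nonzero rows equals the rank $r\leq n$, there are at least $n^2-n$ zero rows, so grouping the first $n$ rows together I may write $UA=\binom{H}{O}$ with $H\in\mathcal{M}_n(\mathbb{Z})$ and $O$ the zero matrix of $\mathcal{M}_{(n^2-n)\times n}(\mathbb{Z})$.

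Finally, since left multiplication by $U$ commutes with the scalar $1/d$, I obtain $U\cdot M(H_W,L_B)=\tfrac{1}{d}\,UA=\binom{(1/d)H}{O}$, and setting $D\coloneqq(1/d)H\in\mathcal{M}_n(\mathbb{Q})$ yields exactly \eqref{eq:reducingM}. The content-bearing step is the passage from $\mathbb{Q}$ to $\mathbb{Z}$, which is why I isolate the denominator $d$ at the outset; everything downstream is the off-the-shelf Hermite normal form together with the trivial observation that $\tfrac1d\cdot O=O$. I do not expect a real obstacle here, only bookkeeping about where the zero rows land.

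I would also record, as a forward-looking remark for the subsequent computation of module indices, that the argument in fact produces an invertible $D$, i.e.\ $\mathrm{rank}(D)=n$. Indeed, if scalars $c_1,\dots,c_n$ satisfy $\sum_i c_i(w_i\cdot\gamma_j)=0$ for every $j$, then $\bigl(\sum_i c_i w_i\bigr)\cdot x=0$ for all $x\in L$ because $\{\gamma_j\}$ is a $\mathbb{Q}$-basis of $L$; this gives $\iota\bigl(1\otimes\sum_i c_i w_i\bigr)=0$, whence $\sum_i c_i w_i=0$ by injectivity of $\iota$ and $c_i=0$ for all $i$ since $W$ is a basis. Thus the $n$ columns of $M(H_W,L_B)$ are linearly independent and $D$ is nonsingular, which is precisely what makes $D$ usable for reading off $[\mathfrak{A}_H(\mathcal{O}):\mathfrak{H}]$ as $\lvert\det D\rvert$.
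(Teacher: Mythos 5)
Your proof is correct and takes essentially the same approach as the paper: clear denominators with a least common multiple, invoke the existence of an integral row reduction (Hermite normal form; the paper cites Kaplansky's Theorems 3.2 and 3.5 for this step), and scale back by $\tfrac{1}{d}$, noting that scalar multiplication commutes with the unimodular transformation. Your closing observation that $D$ is in fact invertible --- via injectivity of $\iota$ and hence linear independence of the columns of $M(H_W,L_B)$ --- is a worthwhile supplement that the paper leaves implicit until it uses $D^{-1}$ in Proposition \ref{pro:basisassocord}.
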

\begin{proof}
Let us show that the proof at \cite[Theorem 2.3]{gilrioquartic} for the case in which $B$ is an integral basis for $L$ also holds in this case. First, by \cite[Theorem 3.2]{kaplansky} we have that every $2\times 1$ matrix with coefficients in $\mathbb{Z}$ can be reduced by means of a unimodular matrix to a matrix of the same size with $0$ as lower entry. By \cite[Theorem 3.5]{kaplansky}, the statement holds for matrices with coefficients in $\mathbb{Z}$. Note that, by definition, we have $M(H_W,L_B)\in\mathcal{M}_{n\times n^2}(\mathbb{Q})$. Define $c$ to be the least common multiple of all denominators of entries in $M(H_W,L_B)$ and let $M=\frac{1}{c}M(H_W,L_B)\in\mathcal{M}_{n^2\times n}(\mathbb{Z})$, so that $M(H_W,L_B)=cM$. Let $U\in\mathrm{GL}_{n^2}(\mathbb{Z})$ and $D'\in\mathcal{M}_n(\mathbb{Z})$ be such that $$UM=\begin{pmatrix}D' \\ \hline \\[-2ex] O\end{pmatrix}.$$ Then the statement holds with $D=cD'$.
\end{proof}

\begin{defi} Any matrix $D\in\mathcal{M}_n(\mathbb{Q})$ fulfilling \eqref{eq:reducingM} is called a \textbf{reduced matrix} of $M(H_W,L_B)$.
\end{defi}

If the matrix $M(H_W,L_B)$ has integer coefficients, then the Hermite normal form of $M(H_W,L_B)$ is a reduced matrix.

In what follows, we will assume that $B$ is a $\mathbb{Z}$-basis for the $\mathbb{Z}$-order $\mathcal{O}$ of $L$ and we will show how to construct a $\mathbb{Z}$-basis of $\mathfrak{A}_H(\mathcal{O})$.

\begin{lem}\label{lem:usereduced} Assume that $B$ is a $\mathbb{Z}$-basis of $\mathcal{O}$ and let $D$ be a reduced matrix of $M(H_W,L_B)$. Given $h=\sum_{i=1}^nh_iw_i\in H$, $h\in\mathfrak{A}_H(\mathcal{O})$ if and only if $$D\begin{pmatrix}h_1 \\ \vdots \\ h_n\end{pmatrix}\in\mathbb{Z}^n.$$
\end{lem}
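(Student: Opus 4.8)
The plan is to unwind the definition of $\mathfrak{A}_H(\mathcal{O})$ into a system of integrality conditions on the coordinates of $h$, encode that system as the single matrix condition $M(H_W,L_B)\mathbf{h}\in\mathbb{Z}^{n^2}$, and finally use the unimodularity of $U$ to replace $M(H_W,L_B)$ by its reduced matrix $D$. Throughout, $\mathbf{h}=(h_1,\dots,h_n)^{\top}$ denotes the coordinate vector of $h$ in the basis $W$.

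First I would reduce the membership $h\in\mathfrak{A}_H(\mathcal{O})$ to a finite check. By definition $h\in\mathfrak{A}_H(\mathcal{O})$ if and only if $h\cdot x\in\mathcal{O}$ for every $x\in\mathcal{O}$. Since for fixed $h$ the action $x\mapsto h\cdot x$ on $L$ is $\mathbb{Q}$-linear, hence in particular additive and $\mathbb{Z}$-linear, and since $\mathcal{O}=\bigoplus_{j=1}^n\mathbb{Z}\gamma_j$, it suffices to test the generators: $h\in\mathfrak{A}_H(\mathcal{O})$ if and only if $h\cdot\gamma_j\in\mathcal{O}$ for all $1\le j\le n$.

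Next I would compute the coordinates of $h\cdot\gamma_j$ in the basis $B$. Expanding $h\cdot\gamma_j=\sum_i h_i(w_i\cdot\gamma_j)=\sum_k\bigl(\sum_i m_{ij}^{(k)}(H_W,L_B)\,h_i\bigr)\gamma_k$ and recalling that the $(k,i)$ entry of $M_j(H_W,L_B)$ is $m_{ij}^{(k)}(H_W,L_B)$, one sees that the coordinate vector of $h\cdot\gamma_j$ in $B$ is exactly $M_j(H_W,L_B)\mathbf{h}$. As $B$ is a $\mathbb{Z}$-basis of $\mathcal{O}$, the condition $h\cdot\gamma_j\in\mathcal{O}$ is equivalent to $M_j(H_W,L_B)\mathbf{h}\in\mathbb{Z}^n$. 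Stacking over $j$ and using the block definition of $M(H_W,L_B)$, the previous paragraph yields: $h\in\mathfrak{A}_H(\mathcal{O})$ if and only if $M(H_W,L_B)\mathbf{h}\in\mathbb{Z}^{n^2}$.

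The final step, which I regard as the crux, is to pass from $M(H_W,L_B)$ to $D$. Because $U\in\mathrm{GL}_{n^2}(\mathbb{Z})$, both $U$ and $U^{-1}$ have integer entries, so multiplication by $U$ maps the lattice $\mathbb{Z}^{n^2}$ bijectively onto itself; in particular a vector $v\in\mathbb{Q}^{n^2}$ lies in $\mathbb{Z}^{n^2}$ if and only if $Uv$ does. Applying this to $v=M(H_W,L_B)\mathbf{h}$ and invoking \eqref{eq:reducingM}, we get that $M(H_W,L_B)\mathbf{h}\in\mathbb{Z}^{n^2}$ holds exactly when $\bigl(\begin{smallmatrix}D\\ O\end{smallmatrix}\bigr)\mathbf{h}\in\mathbb{Z}^{n^2}$. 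The bottom block contributes only the zero vector $O\mathbf{h}=0$, so this is equivalent to $D\mathbf{h}\in\mathbb{Z}^n$, which combined with the earlier reductions gives the claim. The only point requiring genuine care is that the reduction in \eqref{eq:reducingM} is carried out by an \emph{integral} unimodular matrix rather than merely an invertible rational one: this two-sided preservation of $\mathbb{Z}^{n^2}$ is exactly what makes the equivalence valid, and it is why the definition of a reduced matrix insists on $U\in\mathrm{GL}_{n^2}(\mathbb{Z})$.
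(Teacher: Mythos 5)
Your proof is correct and follows essentially the same route as the paper: encode $h\in\mathfrak{A}_H(\mathcal{O})$ as the integrality condition $M(H_W,L_B)\mathbf{h}\in\mathbb{Z}^{n^2}$, then use the unimodular matrix $U$ (integer entries of both $U$ and $U^{-1}$) to replace $M(H_W,L_B)$ by $\bigl(\begin{smallmatrix}D\\ O\end{smallmatrix}\bigr)$. The only difference is one of detail: you explicitly justify the first equivalence by reducing to the $\mathbb{Z}$-basis elements $\gamma_j$ and computing coordinates blockwise, whereas the paper asserts it directly by identifying $M(H_W,L_B)$ as the matrix of $\rho_H\colon H\to\mathrm{End}_{\mathbb{Q}}(L)$.
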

\begin{proof}
The matrix $M(H_W,L_B)$ is the matrix of the linear map $\rho_H\colon H\longrightarrow\mathrm{End}_{\mathbb{Q}}(L)$, where in $H$ we fix the $K$-basis $W$ and in $\mathrm{End}_{\mathbb{Q}}(L)$ we fix the canonical basis of its identification with $\mathcal{M}_{n^2}(\mathbb{Q})$ by means of the $\mathbb{Q}$-basis $B$ of $L$. Then, $h\in\mathfrak{A}_H(\mathcal{O})$ if and only if \begin{equation}\label{eqmembership}
    M(H_W,L_B)\begin{pmatrix}h_1 \\ \vdots \\ h_n\end{pmatrix}\in\mathbb{Z}^{n^2}.
\end{equation} By the definition of reduced matrix, there is a unimodular matrix $U$ such that $$UM(H_W,L_B)=\begin{pmatrix}D \\ \hline \\[-2ex] O\end{pmatrix}.$$ Since $U$ is unimodular, it sends $\mathbb{Z}^{n^2}$ to itself. Thus, applying $U$ to \eqref{eqmembership} yields that $h\in\mathfrak{A}_H(\mathcal{O})$ if and only if $$\begin{pmatrix}D \\ \hline \\[-2ex] O\end{pmatrix}\begin{pmatrix}h_1 \\ \vdots \\ h_n\end{pmatrix}\in\mathbb{Z}^{n^2}.$$ This is clearly equivalent to the condition in the statement.
\end{proof}

\begin{pro}\label{pro:basisassocord} Suppose that $B$ is a $\mathbb{Z}$-basis of $\mathcal{O}$. Let $D$ be a reduced matrix of $M(H_W,L_B)$ and call $D^{-1}=(d_{ij})_{i,j=1}^n$. The elements $$v_i=\sum_{l=1}^nd_{li}w_l,\,1\leq i\leq n$$ form a $\mathbb{Z}$-basis of $\mathfrak{A}_H(\mathcal{O})$.
\end{pro}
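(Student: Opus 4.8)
The plan is to read off $\mathfrak{A}_H(\mathcal{O})$ directly from the membership criterion in Lemma \ref{lem:usereduced} by inverting $D$. First I would observe that $D$ is invertible over $\mathbb{Q}$: the matrix $M(H_W,L_B)$ represents the injective map $\rho_H\colon H\to\mathrm{End}_{\mathbb{Q}}(L)$ (injectivity follows from the isomorphism $\iota$ defining the Hopf-Galois structure, since $\rho_H(h)=\iota(1\otimes h)$), so it has rank $n$; as $U$ is unimodular, $\binom{D}{O}=UM(H_W,L_B)$ also has rank $n$, forcing $D\in\mathcal{M}_n(\mathbb{Q})$ to be nonsingular. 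Hence $D^{-1}$ exists and the entries $d_{ij}$ are well defined.

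Next I would transport everything to coordinates via the $\mathbb{Q}$-linear isomorphism $\varphi\colon H\to\mathbb{Q}^n$ sending $h=\sum_i h_iw_i$ to the column vector $(h_1,\dots,h_n)^{T}$. By Lemma \ref{lem:usereduced}, $\varphi$ identifies $\mathfrak{A}_H(\mathcal{O})$ with the set $\Lambda=\{\mathbf{h}\in\mathbb{Q}^n : D\mathbf{h}\in\mathbb{Z}^n\}$. Multiplying the defining condition $D\mathbf{h}\in\mathbb{Z}^n$ on the left by $D^{-1}$ shows $\Lambda=D^{-1}\mathbb{Z}^n$, the image of the standard lattice $\mathbb{Z}^n$ under the invertible matrix $D^{-1}$.

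Then I would use that $D^{-1}$, being invertible over $\mathbb{Q}$, carries the standard $\mathbb{Z}$-basis $\{e_i\}_{i=1}^n$ of $\mathbb{Z}^n$ to a $\mathbb{Z}$-basis of $D^{-1}\mathbb{Z}^n$: the images $D^{-1}e_i$ are the columns of $D^{-1}$, they generate $D^{-1}\mathbb{Z}^n$ over $\mathbb{Z}$ by construction, and they are $\mathbb{Z}$-linearly independent because $D^{-1}$ is injective. The $i$-th column of $D^{-1}=(d_{ij})$ is precisely $(d_{1i},\dots,d_{ni})^{T}=\varphi(v_i)$, so pulling back along $\varphi^{-1}$ shows that $\{v_i\}_{i=1}^n$ is a $\mathbb{Z}$-basis of $\mathfrak{A}_H(\mathcal{O})$.

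The only genuine subtlety, and the step I would be most careful about, is the index bookkeeping that matches the columns of $D^{-1}$ to the elements $v_i$: the convention $v_i=\sum_{l=1}^n d_{li}w_l$ means that the coordinate vector $\varphi(v_i)$ is the $i$-th \emph{column} (not row) of $D^{-1}$, which is exactly what makes the generating set come out as the columns rather than the rows. Everything else is the standard fact that an invertible rational matrix maps $\mathbb{Z}^n$ isomorphically onto the lattice spanned by its columns.
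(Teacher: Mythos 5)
Your proof is correct and follows essentially the same route as the paper's: both apply Lemma \ref{lem:usereduced} and multiply the membership condition by $D^{-1}$ to identify $\mathfrak{A}_H(\mathcal{O})$ with the $\mathbb{Z}$-span of the columns of $D^{-1}$, the paper doing this coordinate-by-coordinate and you in lattice language ($\Lambda = D^{-1}\mathbb{Z}^n$), with $\mathbb{Z}$-linear independence deduced from invertibility in both cases. Your one genuine addition is the rank argument (injectivity of $\rho_H$ via the Hopf--Galois isomorphism $\iota$) showing that $D$ is actually invertible --- a fact the paper's statement and proof simply presuppose --- so that observation is a worthwhile supplement rather than a deviation.
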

\begin{proof}
Let $h=\sum_{l=1}^nh_lw_l\in H$, with $h_l\in\mathbb{Q}$.  By Lemma \ref{lem:usereduced}, $h\in\mathfrak{A}_H(\mathcal{O})$ if and only if there exist elements $c_1,...,c_n\in\mathbb{Z}$ such that $$D\begin{pmatrix}
h_1 \\
\vdots \\
h_n
\end{pmatrix}=\begin{pmatrix}
c_1 \\
\vdots \\
c_n
\end{pmatrix}.$$ Multiplying by $D^{-1}$ at both sides, this is equivalent to $$\begin{pmatrix}
h_1 \\
\vdots \\
h_n
\end{pmatrix}=\begin{pmatrix}
d_{11} & \cdots & d_{1n} \\
\vdots & \ddots & \vdots \\
d_{n1} & \cdots & d_{nn}
\end{pmatrix}\begin{pmatrix}
c_1 \\
\vdots \\
c_n
\end{pmatrix},$$ that is, $$h_l=\sum_{i=1}^{n}d_{li}c_i,\,1\leq l\leq n.$$ Hence, $h\in\mathfrak{A}_H$ if and only if there exist $c_1,...,c_n\in\mathbb{Z}$ such that $$h=\sum_{l=1}^{n}\sum_{i=1}^{n}d_{li}c_iw_l=\sum_{i=1}^{n}c_i\left(\sum_{l=1}^{n}d_{li}w_l\right)=\sum_{i=1}c_iv_i.$$ The last member clearly belongs to $\langle v_1,...,v_n\rangle_{\mathbb{Z}}$. Hence, $\{v_i\}_{i=1}^n$ is a $\mathbb{Z}$-system of generators of $\mathfrak{A}_H(\mathcal{O})$. Now, it is $\mathbb{Q}$-linearly independent because $\{w_i\}_{i=1}^n$ is a $\mathbb{Q}$-basis and $D^{-1}$ is invertible, so it is also $\mathbb{Z}$-linearly independent and hence a $\mathbb{Z}$-basis of $\mathfrak{A}_H(\mathcal{O})$.
\end{proof}

From Proposition \ref{pro:basisassocord} it follows immediately that $D^{-1}$ is the change basis matrix from $\mathfrak{A}_H(\mathcal{O})$ to the $\mathbb{Z}$-lattice $\mathfrak{H}(\mathcal{O)}=\langle W\rangle_{\mathbb{Z}}$ generated by $W$, and conversely for the reduced matrix $D$. Therefore, $$[\mathfrak{A}_H(\mathcal{O}):\mathfrak{H}(\mathcal{O})]=|\mathrm{det}(D)|$$ for any reduced matrix $D$. We denote it by $I_W(\mathcal{O})$.

Next, we consider the problem of characterizing the $\mathfrak{A}_H(\mathcal{O})$-freeness of $\mathcal{O}$. Given $\beta=\sum_{j=1}^n\beta_j\gamma_j\in\mathcal{O}$, let $$M_{\beta}(H_W,L_B)\coloneqq\sum_{j=1}^n\beta_jM_j(H_W,L_B).$$

\begin{pro}\label{pro:criterionfreeness} Assume that $B$ is a $\mathbb{Z}$-basis of $\mathcal{O}$. An element $\beta\in\mathcal{O}$ is an $\mathfrak{A}_H(\mathcal{O})$-free generator of $\mathcal{O}$ if and only if \begin{equation}\label{eq:condfreeness}
    |\mathrm{det}(M_{\beta}(H_W,L_B))|=I_W(\mathcal{O}).
\end{equation} 
\end{pro}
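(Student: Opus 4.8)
The plan is to connect the $\mathfrak{A}_H(\mathcal{O})$-freeness of $\mathcal{O}$ with a single module index computation, exploiting the machinery already assembled. Recall that $\mathcal{O}$ is $\mathfrak{A}_H(\mathcal{O})$-free of rank one exactly when there exists $\beta\in\mathcal{O}$ with $\mathfrak{A}_H(\mathcal{O})\cdot\beta=\mathcal{O}$; such a $\beta$ is then a free generator. Since $\mathfrak{A}_H(\mathcal{O})\cdot\beta$ is always a $\mathbb{Z}$-submodule of $\mathcal{O}$ of full rank $n$ (because $\mathcal{O}$ is $H$-free of rank one, the map $h\mapsto h\cdot\beta$ sends a $\mathbb{Q}$-basis of $H$ to a $\mathbb{Q}$-basis of $L$, hence is injective on the full-rank lattice $\mathfrak{A}_H(\mathcal{O})$), the equality $\mathfrak{A}_H(\mathcal{O})\cdot\beta=\mathcal{O}$ is equivalent to the module index $[\mathcal{O}:\mathfrak{A}_H(\mathcal{O})\cdot\beta]$ being equal to $1$. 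So I would prove that $\beta$ is a free generator if and only if $[\mathcal{O}:\mathfrak{A}_H(\mathcal{O})\cdot\beta]=1$, and then identify this index with the determinant ratio in \eqref{eq:condfreeness}.

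The key step is to express both sides in a common coordinate system. I would first observe that the $\mathbb{Q}$-linear map $H\longrightarrow L$, $h\mapsto h\cdot\beta$, has matrix exactly $M_{\beta}(H_W,L_B)$ with respect to the basis $W$ of $H$ and the basis $B$ of $L$: indeed, writing $h=\sum_i h_i w_i$, we have $h\cdot\beta=\sum_{i,j}h_i\beta_j\,(w_i\cdot\gamma_j)$, and collecting coefficients against $B$ gives precisely $M_{\beta}(H_W,L_B)(h_1,\dots,h_n)^{t}$ by definition of $M_\beta$ as $\sum_j\beta_j M_j$. Consequently, the image of the lattice $\mathfrak{H}(\mathcal{O})=\langle W\rangle_{\mathbb{Z}}$ under this map is the lattice in $L$ (expressed in the basis $B$) with generating matrix $M_{\beta}(H_W,L_B)$, whence
\[
[\mathcal{O}:\mathfrak{H}(\mathcal{O})\cdot\beta]=|\det(M_{\beta}(H_W,L_B))|,
\]
the determinant being taken against the $\mathbb{Z}$-basis $B$ of $\mathcal{O}$. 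Now I would invoke multiplicativity of the module index together with Proposition \ref{pro:basisassocord}, which gives $[\mathfrak{A}_H(\mathcal{O}):\mathfrak{H}(\mathcal{O})]=I_W(\mathcal{O})$. Since passing from $\mathfrak{H}(\mathcal{O})$ to $\mathfrak{A}_H(\mathcal{O})$ and then applying the injective map $h\mapsto h\cdot\beta$ respects the index, one obtains
\[
[\mathcal{O}:\mathfrak{H}(\mathcal{O})\cdot\beta]=[\mathcal{O}:\mathfrak{A}_H(\mathcal{O})\cdot\beta]\cdot[\mathfrak{A}_H(\mathcal{O})\cdot\beta:\mathfrak{H}(\mathcal{O})\cdot\beta]=[\mathcal{O}:\mathfrak{A}_H(\mathcal{O})\cdot\beta]\cdot I_W(\mathcal{O}).
\]

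Combining the two displays yields $|\det(M_{\beta}(H_W,L_B))|=[\mathcal{O}:\mathfrak{A}_H(\mathcal{O})\cdot\beta]\cdot I_W(\mathcal{O})$. Because $[\mathcal{O}:\mathfrak{A}_H(\mathcal{O})\cdot\beta]$ is a positive integer that equals $1$ precisely when $\beta$ generates $\mathcal{O}$ freely over $\mathfrak{A}_H(\mathcal{O})$, the condition $\mathfrak{A}_H(\mathcal{O})\cdot\beta=\mathcal{O}$ is equivalent to $|\det(M_{\beta}(H_W,L_B))|=I_W(\mathcal{O})$, which is exactly \eqref{eq:condfreeness}. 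The main obstacle I anticipate is justifying cleanly that the index $[\mathfrak{A}_H(\mathcal{O})\cdot\beta:\mathfrak{H}(\mathcal{O})\cdot\beta]$ equals $[\mathfrak{A}_H(\mathcal{O}):\mathfrak{H}(\mathcal{O})]$; this requires the map $h\mapsto h\cdot\beta$ to be an injective $\mathbb{Z}$-linear map of full-rank lattices, so that it preserves finite indices. This injectivity rests on $\beta$ being an $H$-free generator of $L$ over $\mathbb{Q}$, equivalently on $M_{\beta}(H_W,L_B)$ being nonsingular, which holds as long as $\beta\neq 0$ generates $L$; I would state this nonsingularity explicitly at the outset, as the equality \eqref{eq:condfreeness} presupposes a nonzero determinant.
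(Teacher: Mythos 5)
Your argument is, in substance, the paper's own proof: the paper's proof consists exactly of the chain $|\mathrm{det}(M_{\beta}(H_W,L_B))|=[\mathcal{O}:\mathfrak{H}(\mathcal{O})\cdot\beta]_{\mathbb{Z}}=[\mathcal{O}:\mathfrak{A}_H(\mathcal{O})\cdot\beta]_{\mathbb{Z}}\,I_W(\mathcal{O})$, and you have merely filled in why the two equalities hold (that $M_{\beta}(H_W,L_B)$ is the matrix of $h\mapsto h\cdot\beta$ in the bases $W$ and $B$, and multiplicativity of the index along that map). However, one side claim in your last paragraph is false and should not survive into a final write-up: the nonsingularity of $M_{\beta}(H_W,L_B)$ does \emph{not} hold whenever $\beta\neq 0$ generates $L$ as a field. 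Generating $L$ over $\mathbb{Q}$ is strictly weaker than generating $L$ as an $H$-module; in this very paper's cubic setting, $\beta=\alpha$ has coordinates $(\beta_1,\beta_2,\beta_3)=(0,1,0)$, so $D_{\alpha}(H,L)=2(3\beta_1+2a\beta_3)(3a\beta_2^2-9b\beta_2\beta_3+a^2\beta_3^2)=2\cdot 0\cdot 3a=0$, i.e.\ $M_{\alpha}$ is singular even though $\alpha$ is a primitive element. The degenerate case must instead be dispatched as follows: if $\mathrm{det}(M_{\beta}(H_W,L_B))=0$, then \eqref{eq:condfreeness} fails, because $I_W(\mathcal{O})=|\mathrm{det}(D)|>0$ (the reduced matrix $D$ is invertible, as used in Proposition \ref{pro:basisassocord}); and $\beta$ also fails to be a free generator, since $\mathfrak{A}_H(\mathcal{O})\cdot\beta=\mathcal{O}$ would make $h\mapsto h\cdot\beta$ surjective after extending scalars to $\mathbb{Q}$, hence bijective by dimension count, contradicting singularity. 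Thus both sides of the equivalence fail simultaneously in that case, and your index argument, which is valid precisely when $M_{\beta}(H_W,L_B)$ is nonsingular, covers all the remaining ones.
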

\begin{proof}
The element $\beta\in\mathcal{O}_L$ is an $\mathfrak{A}_H(\mathcal{O})$-free generator of $\mathcal{O}_L$ if and only if $\mathcal{O}=\mathfrak{A}_H(\mathcal{O})\cdot\beta$. Now, by definition of $M_{\beta}(H_W,L_B)$, one has $$|\mathrm{det}(M_{\beta}(H_W,L_B))|=[\mathcal{O}:\mathfrak{H}(\mathcal{O})\cdot\beta]_{\mathbb{Z}}=[\mathcal{O}:\mathfrak{A}_H(\mathcal{O})\cdot\beta]_{\mathbb{Z}}I_W(\mathcal{O}).$$ The statement follows immediately.
\end{proof}

This gives rise to the following procedure to determine whether $\mathcal{O}_L$ is $\mathfrak{A}_H(\mathcal{O})$-free.

\begin{enumerate}
    \item\label{alg:step1} We assume that we know a $\mathbb{Z}$-basis $B$ of $\mathcal{O}$.
    \item\label{alg:step2} Fix a $\mathbb{Q}$-basis $W$ of $H$ and determine the coefficients of its action on $B$. From this, construct the matrix $M(H_W,L_B)$.
    \item\label{alg:step3} Transform integrally $M(H_W,L_B)$ to a reduced matrix $D\in\mathcal{M}_n(\mathbb{Q})$ and calculate $I_W(\mathcal{O})=|\mathrm{det}(D)|$.
    \item\label{alg:step4} Given $\beta\in\mathcal{O}_L$, compute $\mathrm{det}(M_{\beta}(H_W,L_B))$ and compare with $I_W(H,L)$.
\end{enumerate}

If we also want to find a $\mathbb{Z}$-basis of $\mathfrak{A}_H(\mathcal{O})$, we calculate the inverse $D^{-1}$ of the reduced matrix $D$ and follow Proposition \ref{pro:basisassocord}.

In order to ease the construction of $M(H_W,L_B)$ at \ref{alg:step2}, we introduce the following object.

\begin{defi}\label{def:grammatrix} The \textbf{Gram matrix} of the Hopf Galois structure $H$ is defined as the matrix $$G(H_W,L_B)=\begin{pmatrix}
w_1\cdot\gamma_1 & w_1\cdot\gamma_2 & \cdots & w_1\cdot\gamma_n \\
w_2\cdot\gamma_1 & w_2\cdot\gamma_2 & \cdots & w_2\cdot\gamma_n \\
\vdots & \vdots & \vdots & \vdots \\
w_n\cdot\gamma_1 & w_n\cdot\gamma_2 & \cdots & w_n\cdot\gamma_n \\
\end{pmatrix}\in\mathcal{M}_n(L).$$
\end{defi}

We have chosen this name in an analogy to the case of the Gram matrix of a scalar product. From the knowledge of $G(H_W,L_B)$ we can construct $M(H_W,L_B)$ by writing down the coordinates of the entries of the latter with respect to $B$ and arranging them accordingly in the former.

As for \ref{alg:step3}, recall that we need a unimodular matrix $U\in\mathrm{GL}_{n^2}(\mathbb{Z})$ as in \ref{eq:reducingM}, in order to find a reduced matrix $D$. The matrix $U$ is actually the product of the matrices corresponding to elementary transformations on the rows of $M(H,L)$, which need to be themselves unimodular. Hence, the only allowed operations are: swap rows, add a row an integer multiple of another one, and switch the sign of a row.

Finally, regarding \ref{alg:step4}, we will see the condition \eqref{eq:condfreeness} as a system of equations where the variables are the coordinates $(\beta_j)_{j=1}^n$ of $\beta$ with respect to $B$.

\section{Extended Euclid's algorithm and continued fractions}\label{sec:euclidalg}

We shall view in this section some techniques addressed to find the Hermite normal form of matrices with integer parameters, which will be eventually needed to find reduced matrices. 

Let $x,y\in\mathbb{Z}$ with $y\neq0$. Suppose that we want to find the Hermite normal form of a matrix of the form $$A=\begin{pmatrix}
x & \lambda \\
y & \gamma
\end{pmatrix}$$ for other two integers $\lambda,\gamma\in\mathbb{Z}$. In order to reduce the first column, we apply Euclid's algorithm to $x$ and $y$. Write $r_{-1}\coloneqq x$ and $r_0\coloneqq y$, and $r_i=a_{i+1}r_{i+1}+r_{i+2}$ for each $-1\leq i\leq n-1$, where $r_n=\mathrm{gcd}(x,y)$ and $r_{n+1}=0$, so that the last division is $r_{n-1}=a_nr_n$. Let us define two finite sequences $\{\mu_i\}_{i=0}^{n+1}$, $\{\nu_i\}_{i=0}^{n+1}$ recursively as follows:
\vspace{2mm}

\begin{minipage}{0.5 \textwidth}
    $$\begin{cases}
        \mu_0=0,\,\mu_1=1,\\
        \mu_i=-a_{i-1}\mu_{i-1}+\mu_{i-2},\,i\geq2
    \end{cases}$$
\end{minipage}
\begin{minipage}{0.5 \textwidth}
    $$\begin{cases}
        \nu_0=1,\,\nu_1=-a_0,\\
        \nu_i=-a_{i-1}\nu_{i-1}+\nu_{i-2},\,i\geq2
    \end{cases}$$
\end{minipage}

\begin{pro} For each $k\in\{1,2\}$, let $F_k$ denote the $k$-th row of the matrix $A$. Let us consider the following sequence of linear elementary transformations on $A$: $$F_1\mapsto F_1-a_0F_2,\quad F_2\mapsto F_2-a_1F_1,\quad F_1\mapsto F_1-a_2F_2,\dots$$ Then, for each $1\leq i\leq n+1$, the matrix we obtain after applying the transformation with $a_{i-1}$ is, up to ordering of rows, $$\begin{pmatrix}
    r_{i-1} & \mu_{i-1}\lambda+\nu_{i-1}\gamma \\
    r_i & \mu_i\lambda+\nu_i\gamma
\end{pmatrix}.$$
\end{pro}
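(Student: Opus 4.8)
The plan is to argue by induction on the step index, keeping track not only of the two row vectors that appear but of \emph{which} physical row carries which one. To organize this I would first abbreviate the conjectural rows by $R_j = (r_j,\ \mu_j\lambda+\nu_j\gamma)$, and extend the two recurrences one step backwards by setting $\mu_{-1}=1$ and $\nu_{-1}=0$. A one-line check against $\mu_1=-a_0\mu_0+\mu_{-1}$ and $\nu_1=-a_0\nu_0+\nu_{-1}$ shows this is the consistent continuation, and it has the pleasant effect of making the untouched matrix $A$ equal to the pair of rows $R_{-1}=(x,\lambda)$ and $R_0=(y,\gamma)$. This identifies the starting configuration with the claimed shape at ``step $0$''.

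Next I would record the alternation explicitly: the operation at step $i$ uses $a_{i-1}$ and rewrites $F_1$ when $i$ is odd and $F_2$ when $i$ is even. The strengthened hypothesis I would carry is that after step $i$ the two rows of the running matrix are exactly $R_{i-1}$ and $R_i$, with $R_i$ occupying $F_1$ when $i$ is odd and $F_2$ when $i$ is even (so $R_{i-1}$ sits in the remaining row). For $i=0$ this is precisely the observation of the previous paragraph, so the base case is free, and proving the statement for $i=0$ through the inductive step will cover all indices $1\leq i\leq n+1$.

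For the inductive step I would observe that the row modified at step $i+1$ is always the ``stale'' one, i.e.\ the row carrying the smaller index $R_{i-1}$, and that it is replaced by $R_{i-1}-a_i R_i$. The entire content of the step then reduces to the single identity $R_{i-1}-a_iR_i=R_{i+1}$, which splits across the two coordinates: in the first coordinate it is the Euclid relation $r_{i-1}-a_ir_i=r_{i+1}$, that is the division $r_{i-1}=a_ir_i+r_{i+1}$ (valid for $0\leq i\leq n$, the case $i=n$ being the terminal division $r_{n-1}=a_nr_n$); in the second coordinate it reads $(\mu_{i-1}-a_i\mu_i)\lambda+(\nu_{i-1}-a_i\nu_i)\gamma=\mu_{i+1}\lambda+\nu_{i+1}\gamma$, which is exactly the defining recurrences $\mu_{i+1}=-a_i\mu_i+\mu_{i-1}$ and $\nu_{i+1}=-a_i\nu_i+\nu_{i-1}$. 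Since the untouched row still carries $R_i$, after step $i+1$ the rows are $R_i$ and $R_{i+1}$ with the parity placement demanded by the hypothesis, closing the induction and giving the asserted matrix up to ordering of rows.

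I do not anticipate a genuine obstacle: this is a bookkeeping induction, and the three recurrences (one arithmetic, two linear) are set up precisely so as to line up simultaneously in the step $R_{i-1}-a_iR_i=R_{i+1}$. The only delicate point is the alternation itself, namely correctly tracking at each parity which row is ``fresh'' (index $i$) and which is ``stale'' (index $i-1$), so that the transformation is always applied to the lower-indexed row. Mishandling the $F_1/F_2$ assignment at a given parity, or dropping the ``up to ordering of rows'' caveat, is the only thing that could derail the write-up; for that reason I would build the parity-dependent placement into the induction hypothesis from the very start rather than trying to reconstruct it after the fact.
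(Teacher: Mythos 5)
Your proof is correct and follows essentially the same route as the paper: induction on the step index, with the whole inductive step reducing to the simultaneous recurrences $r_{i+1}=r_{i-1}-a_ir_i$, $\mu_{i+1}=-a_i\mu_i+\mu_{i-1}$ and $\nu_{i+1}=-a_i\nu_i+\nu_{i-1}$. Your two refinements --- extending the sequences backwards via $\mu_{-1}=1$, $\nu_{-1}=0$ so that $i=0$ is a free base case, and tracking the $F_1/F_2$ placement by parity instead of the paper's ``switching rows if necessary'' --- only make the bookkeeping more explicit; the underlying argument is the same.
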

\begin{proof}
We prove it by induction on $i$. For $i=1$, let us apply on $A$ the transformation $F_1\mapsto F_1-a_0F_2$: $$\begin{pmatrix}
x & \lambda \\
y & \gamma
\end{pmatrix}\longrightarrow\begin{pmatrix}
    x-a_0y & \lambda-a_0\gamma \\
    y & \gamma
\end{pmatrix}=\begin{pmatrix}
    r_1 & \mu_1\lambda+\nu_1\gamma \\
    r_0 & \mu_0\lambda+\nu_0\gamma
\end{pmatrix},$$ which coincides with the matrix at the statement up to ordering of the rows.

Let us assume that the statement holds for some $0\leq i\leq n$, so that after the $i$-th transformation we have (switching rows if necessary) $$\begin{pmatrix}
    r_{i-1} & \mu_{i-1}\lambda+\nu_{i-1}\gamma \\
    r_i & \mu_i\lambda+\nu_i\gamma
\end{pmatrix}.$$ Now, we perform the transformation $F_1\mapsto F_1-a_iF_2$, obtaining $$\begin{pmatrix}
    r_{i-1}-a_ir_i & \mu_{i-1}\lambda+\nu_{i-1}\gamma-a_i(\mu_i\lambda+\nu_i\gamma) \\
    r_i & \mu_i\lambda+\nu_i\gamma
\end{pmatrix}.$$ The $(1,1)$-entry is clearly $r_{i+1}$, while the $(1,2)$-entry can be rewritten as $$(-a_i\mu_i+\mu_{i-1})\lambda+(-a_i\nu_i+\nu_{i-1})\gamma=\mu_{i+1}\lambda+\nu_{i+1}\nu.$$ Thus, switching rows, we obtain the matrix $$\begin{pmatrix}
    r_i & \mu_i\lambda+\nu_i\gamma \\
    r_{i+1} & \mu_{i+1}\lambda+\nu_{i+1}\gamma
\end{pmatrix}.$$
\end{proof}

Specializing for $i=1$ in this result, we obtain the Hermite normal form of $A$.

\begin{coro}\label{coro:hermiteform} The Hermite normal form of the matrix $A$ is $$\begin{pmatrix}
    r_n & \mu_n\lambda+\nu_n\gamma \\
    0 & \mu_{n+1}\lambda+\nu_{n+1}\gamma
\end{pmatrix}.$$
\end{coro}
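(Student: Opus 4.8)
The plan is to specialize the Proposition to the case $i=1$, reading off the matrix after the last meaningful transformation of the Euclidean algorithm. First I would invoke the Proposition with $i=n$ and $i=n+1$ to understand the two final stages. By the Proposition applied with $i=n$, after the $n$-th transformation the matrix is, up to ordering of rows,
\begin{equation*}
\begin{pmatrix}
    r_{n-1} & \mu_{n-1}\lambda+\nu_{n-1}\gamma \\
    r_n & \mu_n\lambda+\nu_n\gamma
\end{pmatrix},
\end{equation*}
and applying the Proposition once more with $i=n+1$ yields
\begin{equation*}
\begin{pmatrix}
    r_n & \mu_n\lambda+\nu_n\gamma \\
    r_{n+1} & \mu_{n+1}\lambda+\nu_{n+1}\gamma
\end{pmatrix}.
\end{equation*}
Since $r_n=\gcd(x,y)$ and $r_{n+1}=0$, the second row has first entry $0$, so this matrix is upper triangular with the claimed shape.

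The remaining work is to check that this upper triangular matrix is genuinely in Hermite normal form, not merely triangular. Here I would recall the defining constraints of the Hermite normal form for a $2\times 2$ integer matrix: the diagonal entries must be nonnegative (or positive where nonzero), and the off-diagonal entry in the first row must be reduced modulo the pivot below. The pivot $r_n=\gcd(x,y)$ is positive by the convention that $\gcd$ is taken positive, which settles the sign of the $(1,1)$-entry. The $(2,2)$-entry $\mu_{n+1}\lambda+\nu_{n+1}\gamma$ is fixed by the transformations and its sign can be corrected by a sign flip of the second row, which is an allowed unimodular operation. The size reduction of the $(1,2)$-entry against the $(2,2)$-pivot is likewise achieved by subtracting an integer multiple of the second row from the first.

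The main obstacle I anticipate is not the algebra but the bookkeeping of exactly which row is on top after each step, since the Proposition only controls the matrix up to ordering of rows. I would need to confirm that after the final transformation the row carrying $r_n$ sits above the row carrying $r_{n+1}=0$, so that the zero genuinely lands in the lower-left position as required for Hermite normal form. This follows from tracking the alternating pattern $F_1\mapsto F_1-a_0F_2$, $F_2\mapsto F_2-a_1F_1,\dots$: at each stage the row being modified acquires the new, smaller remainder, and a final row swap if needed places the pivot on top. Once this parity check is made explicit, the statement follows directly by setting $i=1$ in the Proposition's indexing as claimed, and the corollary is immediate.
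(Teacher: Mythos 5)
Your proposal is correct and takes essentially the same route as the paper: the corollary is just the Proposition specialized to the final step of Euclid's algorithm, i.e.\ $i=n+1$ (the paper's phrase ``specializing for $i=1$'' is a typo for this), where $r_{n+1}=0$ makes the matrix triangular. Your additional normalization remarks (positivity of pivots, size-reducing the $(1,2)$-entry) go beyond what the paper does and are slightly in tension with the literal statement---performing them would change the displayed entries---so the corollary should be read in the paper's looser sense of ``Hermite normal form'' as a reduced triangular matrix obtained by unimodular row operations, which is all that is used later.
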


Next, we describe how to write the remainders $r_i$ in terms of the sequences $\{\mu_i\}$ and $\{\nu_i\}$, deriving a Bezout identity for $\mathrm{gcd}(x,y)$ in terms of $x$ and $y$.

\begin{pro} Given $0\leq i\leq n+1$, $$r_i=\mu_ix+\nu_iy.$$
\end{pro}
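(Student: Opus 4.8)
The plan is to prove the identity by induction on $i$, exploiting the fact that all three sequences $\{r_i\}$, $\{\mu_i\}$ and $\{\nu_i\}$ satisfy one and the same second-order linear recurrence. Indeed, the defining relation $r_i = a_{i+1}r_{i+1}+r_{i+2}$ for $-1\leq i\leq n-1$ can be rewritten, after setting $m=i+2$, as $r_m=-a_{m-1}r_{m-1}+r_{m-2}$ for $1\leq m\leq n+1$; this is precisely the recurrence $s_i=-a_{i-1}s_{i-1}+s_{i-2}$ that defines $\mu_i$ and $\nu_i$ for $i\geq2$. Since such a recurrence is determined by its two initial values, it suffices to check that the combination $\mu_i x+\nu_i y$ matches $r_i$ at the two starting indices and then let the common recurrence propagate the equality.

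First I would dispose of the base cases $i=0$ and $i=1$. For $i=0$ we have $\mu_0 x+\nu_0 y=0\cdot x+1\cdot y=y=r_0$. For $i=1$ we have $\mu_1 x+\nu_1 y=x-a_0 y$, while the remainder relation at $m=1$ gives $r_1=-a_0 r_0+r_{-1}=x-a_0 y$, so the two agree.

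For the inductive step I would fix $2\leq i\leq n+1$ and assume the identity holds at $i-1$ and $i-2$. Combining the remainder recurrence at $m=i$ with the two induction hypotheses gives
$$r_i=-a_{i-1}r_{i-1}+r_{i-2}=-a_{i-1}(\mu_{i-1}x+\nu_{i-1}y)+(\mu_{i-2}x+\nu_{i-2}y),$$
and regrouping the coefficients of $x$ and of $y$ produces $(-a_{i-1}\mu_{i-1}+\mu_{i-2})x+(-a_{i-1}\nu_{i-1}+\nu_{i-2})y$, which is exactly $\mu_i x+\nu_i y$ by the defining recurrences of $\mu_i$ and $\nu_i$. This closes the induction and establishes the claim for all $0\leq i\leq n+1$.

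There is essentially no hard analytic content here; the only point requiring care is the index bookkeeping. One must confirm that the shifted remainder recurrence $r_m=-a_{m-1}r_{m-1}+r_{m-2}$ is valid over the full range $1\leq m\leq n+1$ needed in the argument (in particular at the top index $m=n+1$), and that the $\mu$- and $\nu$-recurrences, which are only stated for indices $\geq2$, are not invoked at $i=0,1$ — this is precisely why those two indices must be verified directly as base cases rather than obtained from the recurrence.
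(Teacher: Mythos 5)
Your proof is correct and follows essentially the same route as the paper: verify the identity at $i=0$ and $i=1$ directly, then propagate it by induction using the fact that $r_i$, $\mu_i$, $\nu_i$ all obey the recurrence $s_i=-a_{i-1}s_{i-1}+s_{i-2}$. Your explicit reindexing of the remainder recurrence and the check that it holds up to $m=n+1$ is just a more careful write-up of the same bookkeeping the paper does implicitly.
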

\begin{proof}
    We prove it by generalized induction on $i$. For $i=0$, we have $$\mu_0x+\nu_0y=y=r_0.$$ For $i=1$, we get $$\mu_1x+\nu_1y=x-a_0y=r_1.$$ Suppose that for some $0\leq i\leq n$ we have that $r_{i'}=\mu_{i'}x+\nu_{i'}y$ for all $0\leq i'\leq i$. Then $$r_{i+1}=r_{i-1}-a_ir_i=-a_i(\mu_ix+\nu_iy)+\mu_{i-1}x+\nu_{i-1}y=(-a_i\mu_i+\mu_{i-1})x+(-a_i\nu_i+\nu_{i-1})y=\mu_{i+1}x+\nu_{i+1}y.$$
\end{proof}

Specializing for $i=n$ and $i=n+1$ we obtain the following relations:

\begin{coro}\label{coro:relgcd0} With the previous notation, we have:
\begin{enumerate}
    \item\label{coro:relgcd} $r_n=\mu_nx+\nu_ny$.
    \item\label{coro:rel0} $0=\mu_{n+1}x+\nu_{n+1}y$.
\end{enumerate}
\end{coro}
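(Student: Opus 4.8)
The plan is to obtain both identities as immediate specializations of the preceding Proposition, which already establishes the general formula $r_i = \mu_i x + \nu_i y$ for every index in the range $0 \leq i \leq n+1$. No new argument is needed beyond reading off the two boundary values of the remainder sequence that were fixed when the Euclidean algorithm was set up at the start of the section.

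First I would recall that running Euclid's algorithm on $r_{-1} = x$ and $r_0 = y$ produces the chain of divisions $r_i = a_{i+1} r_{i+1} + r_{i+2}$, and that by construction this chain terminates precisely with $r_n = \gcd(x,y)$ and $r_{n+1} = 0$. These two boundary facts are exactly what distinguishes the indices $n$ and $n+1$ from the generic case covered by the Proposition. For part (1), I would substitute $i = n$ into $r_i = \mu_i x + \nu_i y$, obtaining $r_n = \mu_n x + \nu_n y$ verbatim; since $r_n = \gcd(x,y)$, this is precisely the advertised Bezout identity. For part (2), I would substitute $i = n+1$ and use $r_{n+1} = 0$ to conclude $0 = \mu_{n+1} x + \nu_{n+1} y$.

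Since the content is a one-line specialization of a formula already proved by induction, there is no genuine obstacle here. The only point meriting a moment of care is purely a matter of bookkeeping: one must confirm that the index range asserted in the Proposition indeed extends to $i = n+1$ (it does, as stated there), so that part (2) is covered directly rather than requiring a separate endpoint computation. With that checked, both claims follow by inspection.
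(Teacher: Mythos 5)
Your proof is correct and matches the paper exactly: the corollary is obtained by specializing the preceding proposition $r_i = \mu_i x + \nu_i y$ at $i = n$ and $i = n+1$, using the boundary facts $r_n = \gcd(x,y)$ and $r_{n+1} = 0$ fixed when Euclid's algorithm was set up. Your bookkeeping check that the proposition's index range extends to $i = n+1$ is the only point of substance, and you handled it correctly.
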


Write $$\frac{x}{y}=[a_0;a_1,\dots,a_n]\coloneqq a_0+\cfrac{1}{a_1+\cfrac{1}{a_2+\cfrac{1}{\ddots+\cfrac{1}{a_n}}}}.$$ for the continued fraction expansion of $\frac{x}{y}$. For each $0\leq i\leq n$, let $$\frac{p_i}{q_i}=[a_0;a_1,\dots,a_i]$$ be the $i$-th convergent of $\frac{x}{y}$ in irreducible form, i.e. $\mathrm{gcd}(p_i,q_i)=1$. Note that, by definition, $p_n=\frac{x}{r_n}$ and $q_n=\frac{y}{r_n}$, since $r_n=\mathrm{gcd}(x,y)$. One checks easily that the sequences $\{p_i\}_{i=0}^n$ and $\{q_i\}_{i=0}^n$ can be expressed recursively as follows:
\begin{align*}
    \begin{cases}
        p_0=a_0,\,p_1=a_0a_1+1,\\
        p_i=a_ip_{i-1}+p_{i-2},\,i\geq2
    \end{cases} &&
    \begin{cases}
        q_0=1,\,q_1=a_1,\\
        q_i=a_iq_{i-1}+q_{i-2},\,i\geq2
    \end{cases}
\end{align*}

Actually, these sequences coincide with the previous ones up to sign.

\begin{pro}\label{pro:relseq} Given $1\leq i\leq n+1$, we have:
\begin{enumerate}
    \item $\mu_i=(-1)^{i-1}q_{i-1}$.
    \item $\nu_i=(-1)^ip_{i-1}$.
\end{enumerate}
\end{pro}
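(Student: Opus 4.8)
The plan is to prove both identities simultaneously by induction on $i$, exploiting the fact that both pairs of sequences obey second-order linear recurrences governed by the same partial quotients $a_i$, the only difference being an alternating sign. First I would dispose of the base cases. For $i=1$ one reads off $\mu_1=1=(-1)^0 q_0$ and $\nu_1=-a_0=(-1)^1 p_0$, and for $i=2$ the defining recurrences give $\mu_2=-a_1\mu_1+\mu_0=-a_1=(-1)^1 q_1$ and $\nu_2=-a_1\nu_1+\nu_0=a_0a_1+1=(-1)^2 p_1$, so both claims hold for $i\in\{1,2\}$.

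For the inductive step, assume both identities hold for all indices up to some $i$ with $2\le i\le n$, and establish them for $i+1$. Using $\mu_{i+1}=-a_i\mu_i+\mu_{i-1}$ together with the hypotheses $\mu_i=(-1)^{i-1}q_{i-1}$ and $\mu_{i-1}=(-1)^{i-2}q_{i-2}$, and observing that $(-1)^{i-2}=(-1)^i$, I would factor out $(-1)^i$ to obtain $\mu_{i+1}=(-1)^i(a_iq_{i-1}+q_{i-2})$. The recurrence $q_i=a_iq_{i-1}+q_{i-2}$ then collapses the parenthesis to $q_i$, yielding $\mu_{i+1}=(-1)^i q_i=(-1)^{(i+1)-1}q_{(i+1)-1}$, as required. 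The computation for $\nu$ is entirely parallel: from $\nu_{i+1}=-a_i\nu_i+\nu_{i-1}$ and the hypotheses I would pull out $(-1)^{i+1}$, use $(-1)^{i-1}=(-1)^{i+1}$, and apply $p_i=a_ip_{i-1}+p_{i-2}$ to get $\nu_{i+1}=(-1)^{i+1}p_i=(-1)^{(i+1)}p_{(i+1)-1}$.

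The only points demanding care are bookkeeping rather than genuine obstacles. One must check that the index ranges match: the sequences $p_i,q_i$ are defined for $0\le i\le n$ while $\mu_i,\nu_i$ run up to $i=n+1$, so the top cases $\mu_{n+1}=(-1)^nq_n$ and $\nu_{n+1}=(-1)^{n+1}p_n$ are exactly those produced by the final induction step and require nothing beyond $p_n,q_n$. One must also confirm that the recurrences $p_i=a_ip_{i-1}+p_{i-2}$ and $q_i=a_iq_{i-1}+q_{i-2}$ are invoked only at indices $i\ge 2$ where they are valid, the remaining cases having been absorbed into the base step. Alternatively, and perhaps more cleanly, one could set $\widetilde{\mu}_i=(-1)^{i-1}q_{i-1}$ and $\widetilde{\nu}_i=(-1)^i p_{i-1}$ and verify directly that these satisfy the defining recurrences and initial conditions of $\mu_i$ and $\nu_i$; uniqueness of solutions to a second-order recurrence with prescribed first two terms then forces $\widetilde{\mu}_i=\mu_i$ and $\widetilde{\nu}_i=\nu_i$ for all $i$.
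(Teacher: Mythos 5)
Your proof is correct and follows essentially the same route as the paper: two-step induction on $i$ using the second-order recurrences for $\mu_i,\nu_i$ and for $q_i,p_i$, with base cases $i=1,2$ (the paper runs the two inductions separately rather than simultaneously, but the computation is identical). Your base-case verification is in fact cleaner than the paper's, which contains a typo there (it writes $q_0=0=\mu_0$ where it should read $q_0=1=\mu_1$), and your closing remark on the index bookkeeping for $\mu_{n+1},\nu_{n+1}$ addresses exactly the range issue the paper leaves implicit.
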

\begin{proof}
We prove both statements by generalized induction on $i$.

\begin{enumerate}
    \item Replacing $i=1$ at the right side gives $q_0=0=\mu_0$, while replacing $i=2$ yields $-q_1=-a_1=\mu_2$. Fix $0\leq i\leq n$ and suppose that $\mu_{i'}=(-1)^{i'-1}q_{i'-1}$ for all $0\leq i'\leq i$. Then $$\mu_{i+1}=-a_i\mu_i+\mu_{i-1}=-a_i(-1)^{i-1}q_{i-1}+(-1)^{i-2}q_{i-2}=(-1)^i(a_iq_{i-1}+q_{i-2})=(-1)^i\mu_i.$$
    \item For $i=1$, we obtain $-p_0=-a_0=\nu_1$. For $i=2$, we get $p_1=a_0a_1+1=\nu_2$. Fix $0\leq i\leq n$ and suppose that $\nu_{i'}=(-1)^{i'}p_{i'-1}$ for all $0\leq i'\leq i$. Then $$\nu_{i+1}=-a_i\nu_i+\nu_{i-1}=-a_i(-1)^ip_{i-1}+(-1)^{i-1}p_{i-2}=(-1)^{i+1}(a_ip_{i-1}+p_{i-2})=(-1)^{i+1}p_i.$$
\end{enumerate}
\end{proof}

\begin{coro}\label{coro:reln+1} With the previous notations, we have:
\begin{enumerate}
    \item\label{coro:reln+1a} $\mu_{n+1}=(-1)^n\frac{y}{r_n}$.
    \item\label{coro:reln+1b} $\nu_{n+1}=(-1)^{n+1}\frac{x}{r_n}$.
\end{enumerate}
\end{coro}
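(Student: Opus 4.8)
The plan is to obtain this directly by specializing Proposition \ref{pro:relseq} to the index $i = n+1$, which is exactly the largest index for which that proposition applies. Setting $i = n+1$ in the two identities of Proposition \ref{pro:relseq} gives $\mu_{n+1} = (-1)^{(n+1)-1} q_{(n+1)-1} = (-1)^n q_n$ and $\nu_{n+1} = (-1)^{n+1} p_{(n+1)-1} = (-1)^{n+1} p_n$. Thus everything reduces to identifying the final convergent numerator $p_n$ and denominator $q_n$ in terms of $x$, $y$ and $r_n$.

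For this I would invoke the observation recorded just after the definition of the convergents, namely that $p_n = \frac{x}{r_n}$ and $q_n = \frac{y}{r_n}$. The justification is that the $n$-th convergent $\frac{p_n}{q_n} = [a_0; a_1, \dots, a_n]$ equals $\frac{x}{y}$ by the very construction of the continued fraction expansion of $\frac{x}{y}$, and it is taken in irreducible form, so $\gcd(p_n, q_n) = 1$. Since $r_n = \gcd(x,y)$, cancelling the common factor $r_n$ from $\frac{x}{y}$ produces precisely the fraction $\frac{p_n}{q_n}$, whence $p_n = \frac{x}{r_n}$ and $q_n = \frac{y}{r_n}$.

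Substituting these two values into the specialized identities finishes the proof: $\mu_{n+1} = (-1)^n q_n = (-1)^n \frac{y}{r_n}$ and $\nu_{n+1} = (-1)^{n+1} p_n = (-1)^{n+1} \frac{x}{r_n}$, which are the two claimed formulas of the corollary.

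I do not anticipate any genuine obstacle here, as the statement is a routine specialization. The only points requiring a moment's care are the sign and index bookkeeping (confirming that $i = n+1$ lies in the range $1 \leq i \leq n+1$ covered by Proposition \ref{pro:relseq}, and that the shifted subscripts $q_{i-1}$, $p_{i-1}$ land on $q_n$, $p_n$) together with the irreducibility argument yielding $p_n = \frac{x}{r_n}$ and $q_n = \frac{y}{r_n}$, which is where the hypothesis $\gcd(p_n,q_n)=1$ is used.
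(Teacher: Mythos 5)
Your proof is correct and is exactly the argument the paper intends: the corollary is stated immediately after Proposition \ref{pro:relseq} as its specialization at $i=n+1$, combined with the identification $p_n=\frac{x}{r_n}$, $q_n=\frac{y}{r_n}$ noted right after the definition of the convergents. Your bookkeeping of indices, signs, and the irreducibility argument matches what the paper leaves implicit.
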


\section{Cubic number fields}\label{sec:cubicfields}

Let $L$ be a cubic number field. It can be checked that $L=\mathbb{Q}(\alpha)$, where $\alpha\in L$ is a root of a polynomial of the form \begin{equation}\label{eq:polynf}
    f(x)=x^3-ax+b,
\end{equation} where $a,b\in\mathbb{Z}$ and $v_p(a)<2$ or $v_p(a)<3$ for every prime $p$ (see for example \cite{llorentenart}). The discriminant of $f$ is $\Delta=4a^3-27b^2$.

\subsection{The Hopf-Galois structure on a cubic number field}

A cubic number field $L$ is always Hopf-Galois. Indeed, from \cite[(7.5)]{childs} we know that a prime degree extension is Hopf-Galois if and only if the Galois group of its normal closure over the ground field is solvable. In our case, calling $\widetilde{L}$ the normal closure of $L$ and $G=\mathrm{Gal}(\widetilde{L}/\mathbb{Q})$, $L$ is Hopf-Galois if and only if $G$ is solvable. This always holds, since $G\cong C_3$ if $L$ is Galois and $G\cong D_3$ otherwise. On the other hand, Byott \cite{byottuniqueness} proved that a Hopf-Galois extension whose degree $n$ is Burnside (that is, $n$ is coprime with its image by the Euler totient function) admits a unique Hopf-Galois structure. It is immediate that a prime number is Burnside, so our cubic number field $L$ admits a unique Hopf-Galois structure $H$. In the case that $L$ is Galois, its only Hopf-Galois structure is the classical one: $\mathbb{Q}[G]$ together with its classical action.

Let us describe explicitly the Hopf-Galois structure on $L/\mathbb{Q}$ in the general case. We will do so by using Greither-Pareigis theorem. The normal closure of $L$ is given by $\widetilde{L}=L(z)$, where $z\coloneqq\sqrt{\Delta}=\sqrt{4a^3-27b^2}$. Call $M\coloneqq\mathbb{Q}(z)$. Since $L\cap M=\mathbb{Q}$, $L$ and $M$ are $\mathbb{Q}$-linearly disjoint fields and $\widetilde{L}=LM$.

Let $\alpha_1\coloneqq\alpha$, $\alpha_2$, $\alpha_3$ be the roots of the polynomial $f$ as in \eqref{eq:polynf}, and let $\sigma\in G$ be defined by $\sigma=(\alpha_1,\alpha_2,\alpha_3)$ as a permutation of the roots of $f$. If $G\cong C_3$, then $$G=\langle\sigma\,\mid\,\sigma^3=\mathrm{Id}\rangle.$$ Otherwise, we have that $z\in\widetilde{L}-L$ and $\sigma(z)=z$. Let $\tau\in G$ be defined by $\tau(\alpha_i)=\alpha_i$, $i\in\{1,2,3\}$, and $\tau(z)=-z$. It follows that $$G=\langle\sigma,\tau\,\mid\,\sigma^3=\tau^2=\mathrm{Id},\,\tau\sigma=\sigma^2\tau\rangle.$$

Let $G'\coloneqq\mathrm{Gal}(\widetilde{L}/L)$. Necessarily, $G'=\langle\tau\rangle$. This subgroup admits a normal complement within $G$, namely $J=\langle\sigma\rangle$, so $L$ is almost classically Galois. Hence, its only Hopf-Galois structure is given by $H=M[\lambda(J)]^{G'}$, where $G'$ acts on $M$ by the classical action and on $J$ by conjugation with $\lambda(G')$ (as already mentioned at the end of Section \ref{sec:prelimhg}). Let us identify $J$ with $\lambda(J)$. Then, the previous action becomes simply the conjugacy action of $G'$ on $J$.

\begin{pro}\label{pro:basisH} Let $L$ be a cubic number field and let $H$ be its Hopf-Galois structure. Let $\sigma$ and $z$ be defined as above. A $\mathbb{Q}$-basis for the underlying $\mathbb{Q}$-Hopf algebra at $H$ is given by \begin{equation}\label{eq:basisH}
w_1=\mathrm{Id},\quad w_2=z(\sigma-\sigma^2),\quad w_3=\sigma+\sigma^2.
\end{equation}
\end{pro}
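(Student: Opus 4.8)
The plan is to realize $H=M[\lambda(J)]^{G'}$ explicitly as the $G'$-invariants of the group algebra $M[J]$ and to read a basis off the invariance conditions. First I would identify $J=\langle\sigma\rangle$ with $\lambda(J)$, as indicated at the end of the previous subsection, so that $M[\lambda(J)]=M[J]$ is a free $M$-module with basis $\{1,\sigma,\sigma^2\}$; a general element is $h=h_0+h_1\sigma+h_2\sigma^2$ with $h_0,h_1,h_2\in M=\mathbb{Q}(z)$. In the non-Galois case $G'=\langle\tau\rangle$, and by construction $G'$ acts on the coefficients in $M$ by the Galois action (so $\tau(z)=-z$) and on the group elements of $J$ by conjugation with $\lambda(\tau)$. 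From $\tau\sigma=\sigma^2\tau$ I get $\tau\sigma\tau^{-1}=\sigma^2$ and $\tau\sigma^2\tau^{-1}=\sigma$, so conjugation by $\tau$ fixes $1$ and swaps $\sigma$ and $\sigma^2$.

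Next I would impose $\tau\cdot h=h$. Combining the coefficient action with the permutation of group elements gives $\tau\cdot h=\tau(h_0)+\tau(h_2)\sigma+\tau(h_1)\sigma^2$, and equating coordinates against $h$ (using $M$-linear independence of $1,\sigma,\sigma^2$) yields $\tau(h_0)=h_0$ together with $h_2=\tau(h_1)$. The first condition forces $h_0\in M^{\langle\tau\rangle}=\mathbb{Q}$, since $\tau$ restricts to the nontrivial element of $\mathrm{Gal}(M/\mathbb{Q})$ (because $z\neq0$); the second condition determines $h_2$ from an arbitrary $h_1\in M$. Writing $h_1=p+qz$ with $p,q\in\mathbb{Q}$, so that $h_2=\tau(h_1)=p-qz$, I obtain
$$h=h_0+p(\sigma+\sigma^2)+qz(\sigma-\sigma^2),$$
which exhibits $\{\,1,\,z(\sigma-\sigma^2),\,\sigma+\sigma^2\,\}=\{w_1,w_2,w_3\}$ as a $\mathbb{Q}$-spanning set of $H$. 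Linear independence over $\mathbb{Q}$ is immediate by reading off the three coordinates with respect to $1,\sigma,\sigma^2$, so these elements form a $\mathbb{Q}$-basis, of the expected dimension $3=[L:\mathbb{Q}]$.

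Finally I would dispatch the Galois case $G\cong C_3$ uniformly: there $G'$ is trivial, $M=\mathbb{Q}$, $z=\sqrt{\Delta}\in\mathbb{Q}^{\times}$ (nonzero since $f$ is separable), and $H=\mathbb{Q}[\langle\sigma\rangle]$ is the classical structure. The proposed triple is then $\{1,\sigma-\sigma^2,\sigma+\sigma^2\}$ up to the nonzero scalar $z$, which is plainly a $\mathbb{Q}$-basis of $\mathbb{Q}[C_3]$, so the formula holds in both cases. The computation is essentially routine; the one step demanding care is transcribing the two halves of the $G'$-action correctly, namely that $\tau$ acts by Galois conjugation on the coefficients in $M$ and by the conjugation $\sigma\leftrightarrow\sigma^2$ on the group elements, and letting them interact correctly in the equation $\tau\cdot h=h$. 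This coordinate-matching step is where an error would be most likely to creep in, so I would verify it carefully.
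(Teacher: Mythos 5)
Your proof is correct, and it inverts the logical structure of the paper's argument in a way worth noting. In the non-Galois case the paper proceeds by \emph{verification plus dimension count}: it checks that each of $w_1,w_2,w_3$ is fixed by $\tau$ (the same conjugation and coefficient computations you perform), concludes that $\langle w_1,w_2,w_3\rangle_{\mathbb{Q}}\subseteq H$, and then invokes the fact that both sides have $\mathbb{Q}$-dimension $3$ to get equality. You instead \emph{derive} the basis by solving the invariance equations: writing a general $h=h_0+h_1\sigma+h_2\sigma^2\in M[J]$ and imposing $\tau\cdot h=h$, you obtain $h_0\in\mathbb{Q}$ and $h_2=\tau(h_1)$, whence every invariant element is $h_0+p(\sigma+\sigma^2)+qz(\sigma-\sigma^2)$ with $h_0,p,q\in\mathbb{Q}$. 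Your route is slightly longer but self-contained on the dimension question: you never need to know in advance that $\dim_{\mathbb{Q}}H=3$ (which the paper's dimension count tacitly takes from the Hopf-Galois property or Galois descent), and it shows \emph{why} this particular triple arises rather than merely confirming it. The paper's route is shorter precisely because it can lean on that known dimension. In the Galois case the two arguments are essentially identical — the paper exhibits the change-of-basis matrix with determinant $2z\neq0$, while you observe that rescaling $w_2$ by the nonzero rational $z$ does not affect the basis property of $\{1,\sigma-\sigma^2,\sigma+\sigma^2\}$; both hinge on $z\neq0$. Your coordinate-matching step ($\tau\cdot h=\tau(h_0)+\tau(h_2)\sigma+\tau(h_1)\sigma^2$) is transcribed correctly, and your reduction of the two conditions $\tau(h_2)=h_1$, $\tau(h_1)=h_2$ to the single condition $h_2=\tau(h_1)$ is legitimate since $\tau$ is an involution.
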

\begin{proof}
If $L/\mathbb{Q}$ is Galois, we know that $H=\mathbb{Q}[G]$, which has $\mathbb{Q}$-basis $\{\mathrm{Id},\sigma,\sigma^2\}$. Now, the matrix of coordinates of the $w_i$ with respect to this one is $$\begin{pmatrix}
    1 & 0 & 0 \\
    0 & z & 1 \\
    0 & -z & 1
\end{pmatrix},$$ which has non-zero determinant, so $\{w_1,w_2,w_3\}$ is a $\mathbb{Q}$-basis of $H$.

Suppose that $L/\mathbb{Q}$ is not Galois. It is trivial that $w_i\in M[J]$ for all $i\in\{1,2,3\}$. In order to check that they are fixed by the action of $G'$, it is enough to do so for the generator $\tau$. We have: $$\tau\cdot w_1=\tau\mathrm{Id}\tau^{-1}=\mathrm{Id},$$ $$\tau\cdot w_2=\tau(z)(\tau\sigma\tau^{-1}-\tau\sigma^2\tau^{-1})=-z(\sigma^2-\sigma)=z(\sigma-\sigma^2)=w_2,$$ $$\tau\cdot w_3=\tau\sigma\tau^{-1}+\tau\sigma^2\tau^{-1}=\sigma^2+\sigma=w_3.$$ Then the span $\langle w_1,w_2,w_3\rangle_{\mathbb{Q}}$ is contained in $H$, and since both have $\mathbb{Q}$-dimension $3$, they coincide.
\end{proof}

\section{Associated orders of monogenic orders in cubic number fields}\label{sec:assocorders}

Let $L$ be a cubic number field and write $\alpha$ for a primitive element of $L/\mathbb{Q}$ which is a root of a polynomial as in \eqref{eq:polynf}. We consider the following problem.

\begin{prob} Let $H$ be the only Hopf-Galois structure on $L$. Find a $\mathbb{Z}$-basis of $\mathfrak{A}_H(\mathbb{Z}[\alpha])$ and a necessary and sufficient condition for $\mathcal{O}$ being $\mathfrak{A}_H(\mathcal{O})$-free.
\end{prob}

We shall follow the method at Section \ref{sec:redmethod}. Of course, $B=\{1,\alpha,\alpha^2\}$ is a $\mathbb{Z}$-basis for $\mathcal{O}$. Whenever $B$ is an integral basis for $L$, $\mathcal{O}=\mathcal{O}_L$ and we recover the usual problem for the module structure of the ring of integers.

\subsection{Looking for the Gram matrix}

In this section we will find the Gram matrix $G(H_W,L_B)$ as defined in Definition \ref{def:grammatrix}.

\begin{pro}\label{pro:grammatrixcanon} With the previous notations, we have $$G(H_W,L_B)=\begin{pmatrix}
    1 & \alpha & \alpha^2 \\
    0 & 6a\alpha^2+9b\alpha-4a^2 & -9b\alpha^2-2a^2\alpha+6ab \\
    2 & -\alpha & -\alpha^2+2a
\end{pmatrix}.$$
\end{pro}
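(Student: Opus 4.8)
The plan is to compute each entry $w_i \cdot \gamma_j$ of the Gram matrix directly using the Hopf-algebra action formula \eqref{eq:hopfaction} together with the explicit $\mathbb{Q}$-basis $w_1 = \mathrm{Id}$, $w_2 = z(\sigma - \sigma^2)$, $w_3 = \sigma + \sigma^2$ from Proposition \ref{pro:basisH}, and then re-express the resulting elements of $L$ in the basis $B = \{1, \alpha, \alpha^2\}$. First I would record the action of $\sigma$ and $\sigma^2$ on the $\alpha_i$: since $\sigma = (\alpha_1, \alpha_2, \alpha_3)$, the formula \eqref{eq:hopfaction} applied to $\eta = \sigma^k$ evaluates at the class $\overline{1_G}$, so that $\sigma \cdot \gamma$ and $\sigma^2 \cdot \gamma$ are the images of $\gamma = \gamma(\alpha_1)$ under the substitutions $\alpha_1 \mapsto \alpha_2$ and $\alpha_1 \mapsto \alpha_3$ respectively (with $z$ treated as fixed by $\sigma$). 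The first row is immediate since $w_1 = \mathrm{Id}$ acts trivially, giving $(1, \alpha, \alpha^2)$, and the entries of the third row $w_3 \cdot \gamma_j = \sigma \cdot \gamma_j + \sigma^2 \cdot \gamma_j$ are the power sums over the conjugates, which are symmetric and hence rational.

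The crux of the computation is to evaluate the symmetric and antisymmetric combinations of the conjugate powers $\alpha_i^k$ for $k = 0, 1, 2$ and rewrite them using Newton's identities and the relation $\alpha_1^3 = a\alpha_1 - b$. For the third row I would use the elementary symmetric functions of the roots of $f(x) = x^3 - ax + b$, namely $e_1 = \alpha_1 + \alpha_2 + \alpha_3 = 0$, $e_2 = \alpha_1\alpha_2 + \alpha_1\alpha_3 + \alpha_2\alpha_3 = -a$, and $e_3 = \alpha_1\alpha_2\alpha_3 = -b$. Then $\sigma \cdot 1 + \sigma^2 \cdot 1 = 2$ (the constant contributions), $\alpha_2 + \alpha_3 = -\alpha_1 = -\alpha$, and $\alpha_2^2 + \alpha_3^2 = (\alpha_2 + \alpha_3)^2 - 2\alpha_2\alpha_3 = \alpha^2 - 2(e_3/\alpha_1) \cdot(-1)$; here one must carefully use $\alpha_2\alpha_3 = e_3/\alpha_1 = -b/\alpha$ and reduce $1/\alpha$ via $\alpha^3 = a\alpha - b$ to land back in $B$, which should yield $-\alpha^2 + 2a$ as stated.

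The second row is the genuinely laborious part, since $w_2 = z(\sigma - \sigma^2)$ produces the antisymmetric differences $\alpha_2^k - \alpha_3^k$ multiplied by $z = \sqrt{\Delta}$. The hard part will be verifying that these products land in $L$ (rather than merely in $\widetilde L$) and computing their $B$-coordinates. The key observation is that $\alpha_2 - \alpha_3$ is antisymmetric under the transposition swapping $\alpha_2, \alpha_3$, which also sends $z \mapsto -z$, so that $z(\alpha_2 - \alpha_3)$ is symmetric in $\alpha_2, \alpha_3$ and can be expressed as a polynomial in $\alpha_1 = \alpha$ with rational coefficients. I would compute $z(\alpha_2 - \alpha_3)$, $z(\alpha_2^2 - \alpha_3^2) = z(\alpha_2 + \alpha_3)(\alpha_2 - \alpha_3) = -\alpha \cdot z(\alpha_2 - \alpha_3)$, and the constant entry $z(\sigma - \sigma^2) \cdot 1 = 0$ (which explains the leading $0$). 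The main obstacle is pinning down $z(\alpha_2 - \alpha_3)$ as a polynomial in $\alpha$: one has $(\alpha_2 - \alpha_3)^2 = (\alpha_2 + \alpha_3)^2 - 4\alpha_2\alpha_3 = \alpha^2 + 4b/\alpha$, and since $z^2 = \Delta = 4a^3 - 27b^2$ equals $-(\alpha_1 - \alpha_2)^2(\alpha_1-\alpha_3)^2(\alpha_2-\alpha_3)^2$ up to the standard discriminant sign, I would extract $z(\alpha_2 - \alpha_3)$ by taking the appropriate square root of $\Delta \cdot (\alpha_2 - \alpha_3)^2$ and simplifying via $\alpha^3 = a\alpha - b$; the sign must be fixed consistently with the chosen ordering of roots so that the off-diagonal entries come out as $6a\alpha^2 + 9b\alpha - 4a^2$ and $-9b\alpha^2 - 2a^2\alpha + 6ab$. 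Once the polynomial form of $z(\alpha_2 - \alpha_3)$ is secured, the remaining reductions are routine applications of the minimal polynomial relation.
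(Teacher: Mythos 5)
Your proposal follows essentially the same route as the paper's proof: the first row is trivial, the third row is obtained from the symmetric functions of the roots (giving $\alpha_2+\alpha_3=-\alpha$ and $\alpha_2^2+\alpha_3^2=2a-\alpha^2$), and the second row is reduced to expressing $z(\alpha_2-\alpha_3)$ as a polynomial in $\alpha$ with the sign absorbed into the labelling of $\alpha_2,\alpha_3$ --- which is exactly the identity $z\sqrt{d}=6a\alpha^2+9b\alpha-4a^2$, $d=(\alpha_2-\alpha_3)^2=4a-3\alpha^2$, that the paper verifies by computer algebra. The only blemishes are cosmetic and harmless: the third-row entries are symmetric only in $\alpha_2,\alpha_3$ and hence lie in $L$, not in $\mathbb{Q}$ as you assert, and the cubic discriminant satisfies $\Delta=+\prod_{i<j}(\alpha_i-\alpha_j)^2$, so no minus sign is needed in that factorization.
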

\begin{proof}
The first row of $G(H_W,L_B)$ corresponds to the action of $w_1=\mathrm{Id}$ on $B$, which is trivial. On the other hand, for the first column we have $$w_2\cdot1=z(1-1)=0,\quad w_3\cdot1=1+1=2.$$ 

Recall that $\sigma$ permutes the roots $\alpha_1=\alpha$, $\alpha_2$, $\alpha_3$ as the permutation $(\alpha_1,\alpha_2,\alpha_3)$. On the other hand, the symmetric functions of the roots give the relations $$\begin{cases}
    \alpha_1+\alpha_2+\alpha_3=0, \\
    \alpha_1\alpha_2+\alpha_1\alpha_3+\alpha_2\alpha_3=-a, \\
    \alpha_1\alpha_2\alpha_3=b.
\end{cases}$$ 

We first determine the action of $w_3$: $$w_3\cdot\alpha=\alpha_2+\alpha_3=-\alpha,$$ $$w_3\cdot\alpha^2=\alpha_2^2+\alpha_3^2.$$ At this point, we note that $$\alpha_1^2+\alpha_2^2+\alpha_3^2=(\alpha_1+\alpha_2+\alpha_3)^2-2(\alpha_1\alpha_2+\alpha_1\alpha_3+\alpha_2\alpha_3)=2a.$$ Therefore, $$w_3\cdot\alpha^2=-\alpha^2+2a,$$ as stated.

In order to determine the action of $w_2$, we need further preparations. Using Ruffini algorithm, we write $$f(x)=(x-\alpha)(x^2+\alpha x+\alpha^2-a).$$ The right side member is a quadratic polynomial in $L[x]$ with roots $\alpha_2$ and $\alpha_3$. Due to the symmetry between these roots, we can assume without loss of generality that $$\alpha_2=\frac{-\alpha+\sqrt{d}}{2},\quad\alpha_3=\frac{-\alpha-\sqrt{d}}{2},$$ where $d\coloneqq-3\alpha^2+4a$. Using mathematical software, one can check that $$\sqrt{d}=\frac{1}{z}(6a\alpha^2+9b\alpha-4a^2),$$ where the sign is chosen arbitrarily (the opposite sign would involve exchanging $\alpha_2$ and $\alpha_3$).

Finally, we calculate $$w_2\cdot\alpha=z(\alpha_2-\alpha_3)=z\sqrt{d}=6a\alpha^2+9b\alpha-4a^2,$$ $$w_2\cdot\alpha^2=z(\alpha_2^2-\alpha_3^2)=z(\alpha_2-\alpha_3)(\alpha_2+\alpha_3)=-\alpha(6a\alpha^2+9b\alpha-4a^2)=-9b\alpha^2-2a^2\alpha+6ab.$$
\end{proof}

\subsection{Reducing matrices of the action}

From the knowledge of $G(H_W,L_B)$ we construct $$M(H_W,L_B)=\begin{pmatrix}
    1 & 0 & 2 \\
    0 & 0 & 0 \\
    0 & 0 & 0 \\
    0 & -4a^2 & 0 \\
    1 & 9b & -1 \\
    0 & 6a & 0 \\
    0 & 6ab & 2a \\
    0 & -2a^2 & 0 \\
    1 & -9b & -1
\end{pmatrix}.$$ We aim to find a reduced matrix of this one. To do so, recall that we must reduce $M(H_W,L_B)$ using only row swapping, addition of integer multiple of rows and sign switching of rows. With these operations, it is easy to reduce $M(H_W,L_B)$ to the matrix \begin{equation}\label{eq:redmatrix1}
    \begin{pmatrix}
    1 & 0 & 2 \\
    0 & 2a^2 & 0 \\
    0 & 6a & 0 \\
    0 & 9b & -3 \\
    0 & 0 & 2a \\
    0 & 0 & 6
\end{pmatrix}.
\end{equation} Now, taking into account Euclid's algorithm on $(2a^2,6a)$ and $(2a,6)$, this matrix reduces to \begin{equation}\label{eq:redmatrix2}
    \begin{pmatrix}
    1 & 0 & 2 \\
    0 & g_1 & 0 \\
    0 & 9b & -3 \\
    0 & 0 & g_2
\end{pmatrix},
\end{equation} where $g_1=\mathrm{gcd}(2a^2,6a)$ and $g_2=\mathrm{gcd}(2a,6)$. It is easy to check that $$g_1=\begin{cases}
    2a & \hbox{if }3\nmid a, \\
    6a & \hbox{if }3\mid a,
\end{cases}$$ $$g_2=\begin{cases}
    2 & \hbox{if }3\nmid a, \\
    6 & \hbox{if }3\mid a
\end{cases}.$$

\subsection*{Case 1: $3\nmid a$}

If $3\nmid a$, \eqref{eq:redmatrix2} becomes \begin{equation}\label{eq:redmatrix3}
    \begin{pmatrix}
    1 & 0 & 0 \\
    0 & 2a & 0 \\
    0 & 9b & 1 \\
    0 & 0 & 2
\end{pmatrix}.
\end{equation} Let us call $g=\mathrm{gcd}(a,b)$ and $h=\mathrm{gcd}(2a,9b)$. Let us find $h$ in terms of $g$. It is clear that $g$ divides $h$. Since $3\nmid a$, $h=\mathrm{gcd}(2a,b)$.

If $v_2(a)\geq v_2(b)$, we have that $v_2(2a)>v_2(b)$, so $v_2(h)=v_2(b)=v_2(g)$. Otherwise, if $v_2(a)<v_2(b)$, then $v_2(2a)\leq v_2(b)$ and $v_2(h)=v_2(2a)=v_2(a)+1=v_2(g)+1$. On the other hand, if $p$ is an odd prime, we have that $v_p(2a)=v_p(a)$ and $v_p(h)=v_p(g)$. We obtain that $$h=\begin{cases}
    g & \hbox{if }v_2(a)\geq v_2(b), \\
    2g & \hbox{if }v_2(a)<v_2(b).
\end{cases}$$

\begin{pro}\label{pro:redanotdiv3} Assume that $3\nmid a$. A reduced matrix for $M(H_W,L_B)$ is:
\begin{enumerate}
    \item If $v_2(a)\geq v_2(b)$, $$\begin{pmatrix}
        1 & 0 & 0 \\
        0 & g & 1 \\
        0 & 0 & 2
    \end{pmatrix}.$$

    \item If $v_2(a)<v_2(b)$, $$\begin{pmatrix}
        1 & 0 & 0 \\
        0 & 2g & 0 \\
        0 & 0 & 1
    \end{pmatrix}.$$
\end{enumerate}
\end{pro}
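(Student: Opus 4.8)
The plan is to start from the matrix \eqref{eq:redmatrix3}, namely
$$\begin{pmatrix}
1 & 0 & 0 \\
0 & 2a & 0 \\
0 & 9b & 1 \\
0 & 0 & 2
\end{pmatrix},$$
and perform further integral row operations to clear the remaining off-diagonal entries while respecting the admissible operations (row swaps, adding integer multiples of rows, sign changes). The first row is already in final form, so the whole problem reduces to finding a reduced form of the lower $3\times 2$ block on columns two and three. The key observation is that the second column has entries $2a$ and $9b$ in the rows containing the pivots, and I expect Euclid's algorithm on the pair $(2a,9b)$ to be the engine that produces the greatest common divisor $h=\gcd(2a,9b)$ in that column. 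Since the excerpt has just computed $h=g$ when $v_2(a)\geq v_2(b)$ and $h=2g$ when $v_2(a)<v_2(b)$, the two cases of the proposition should emerge directly from this gcd computation.

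First I would treat the interaction between the second and third columns carefully. The row $(0,9b,1)$ carries a $1$ in the third column, so after reducing the second column I must track what happens to the third-column entries. I would use Corollary \ref{coro:hermiteform} (and the accompanying Bezout bookkeeping via the sequences $\{\mu_i\},\{\nu_i\}$) applied to the first column pair $(2a,9b)$ to reduce it to $(h,0)$ up to ordering, simultaneously recording the induced linear combinations $\mu_i\cdot 0+\nu_i\cdot 1=\nu_i$ appearing in the third column. The crucial structural point is that $r_n=h$ divides $2a$, and by Corollary \ref{coro:reln+1}\ref{coro:reln+1b} the surviving combination in the vanishing row is $\pm\frac{2a}{h}$ times the entry $1$, up to sign. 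In the case $v_2(a)\geq v_2(b)$ we have $h=g$ and $\frac{2a}{h}=\frac{2a}{g}$; I would then combine the resulting row with the last row $(0,0,2)$ to obtain the claimed form with pivot $g$ in position $(2,2)$, an entry $1$ in position $(2,3)$, and $2$ in position $(3,3)$. In the case $v_2(a)<v_2(b)$ we have $h=2g$, and here I expect the extra factor of $2$ to interact with the bottom row $(0,0,2)$ so that the third-column pivot collapses to $1$, yielding the diagonal form $\mathrm{diag}(1,2g,1)$.

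The main obstacle I anticipate is the careful sign-and-divisibility bookkeeping in the third column when the first column is reduced. Reducing $(2a,9b)$ to $(h,0)$ is mechanical, but the nontrivial content is that the off-diagonal $1$ in the original second pivot row gets replaced by a combination whose reducibility modulo the final third-column pivot distinguishes the two cases. Concretely, I must verify that in the first case the entry left in position $(2,3)$ can be normalized to $1$ (and is not forced to vanish), while in the second case the presence of the factor $2$ in $h=2g$ forces the surviving third-column entries to include an odd combination that, together with the row $(0,0,2)$, reduces the third pivot to $1$ and kills the $(2,3)$ entry. I would resolve this by writing Bezout coefficients explicitly: since $\gcd(h,2)$ governs the final third-column pivot, and $h=g$ is compatible with keeping pivot $2$ while $h=2g$ forces a unit combination with $2$, the case split in the statement is exactly the reflection of whether $h/g$ equals $1$ or $2$. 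Once the two parities are disentangled, each reduced matrix is obtained by a short explicit sequence of admissible row operations, and checking that the result is genuinely reduced (upper triangular with the zero block below, as in \eqref{eq:reducingM}) is immediate.
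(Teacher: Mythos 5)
Your overall strategy coincides with the paper's: starting from \eqref{eq:redmatrix3}, run Euclid's algorithm on the pair $(2a,9b)$, track the third column through the sequences $\{\mu_i\},\{\nu_i\}$ of Section \ref{sec:euclidalg}, and identify the entry in the vanishing row as $\pm\frac{2a}{h}$ via Corollary \ref{coro:reln+1} (in your convention $x=2a$, $y=9b$, so the third-column entries are the $\nu_i$ and the vanishing row carries $\nu_{n+1}=\pm\frac{2a}{h}$; the paper takes $x=9b$, $y=2a$ and tracks the $\mu_i$ instead). However, there is a genuine gap in the case $v_2(a)\geq v_2(b)$: the claimed reduced matrix has entry $1$ in position $(2,3)$, and to get it you must prove that the Bezout coefficient surviving in that position (your $\nu_n$, the paper's $\mu_n$) is \emph{odd}; otherwise the reduction would yield $\mathrm{diag}(1,g,2)$, which has the same determinant but spans a different row lattice, hence produces a different associated order via Proposition \ref{pro:basisassocord}. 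You flag this as something ``to verify'' but never verify it, and the mechanism you substitute --- ``$\gcd(h,2)$ governs the final third-column pivot'' --- is false: for $a=b=1$ one has $h=g=1$, so $\gcd(h,2)=1$, yet the third-column pivot is $2$ because $\nu_{n+1}=\pm 2a/h=\pm2$ is even. The pivot is governed by the parity of $\nu_{n+1}=\pm 2a/h$, and the $(2,3)$ entry by the parity of $\nu_n$; these are two separate facts requiring two separate arguments, neither of which reduces to a statement about $h$ alone.

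The paper closes exactly this gap with a $2$-adic valuation argument from the Bezout identity of Corollary \ref{coro:relgcd0}: writing $h=\mu_n(2a)+\nu_n(9b)$ (in your convention), if $\nu_n$ were even then both summands would have $2$-valuation strictly greater than $v_2(b)=v_2(h)$, contradicting $v_2(h)=v_2(b)$; hence $\nu_n$ is odd and the $(2,3)$ entry reduces to $1$ modulo the row $(0,0,2)$. Note also that keeping the pivot $2$ in this case uses the evenness of $2a/g$, i.e.\ $v_2(a)+1-v_2(b)\geq 1$, which you assume implicitly. Your second case $v_2(a)<v_2(b)$ is essentially sound once one observes $v_2(g)=v_2(a)$, so that $\nu_{n+1}=\pm a/g$ is odd and the row $(0,0,1)$ appears, after which the $(2,3)$ entry is cleared regardless of its parity. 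In summary, the skeleton of your argument matches the paper's proof, but the parity analysis that constitutes the actual mathematical content of the first case is missing, and the heuristic offered in its place is incorrect.
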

\begin{proof}
We apply Euclid's algorithm with $x=9b$ and $y=2a$. By Corollary \ref{coro:hermiteform}, \eqref{eq:redmatrix3} can be transformed integrally into \begin{equation}\label{eq:redmatrix4}
    \begin{pmatrix}
    1 & 0 & 0 \\
    0 & h & \mu_n \\
    0 & 0 & \mu_{n+1} \\
    0 & 0 & 2
\end{pmatrix}.
\end{equation}

Assume that $v_2(a)\geq v_2(b)$. Then $h=g$ and $v_2(h)=v_2(b)=v_2(x)$. 
From Corollary \ref{coro:relgcd0} \ref{coro:rel0}, we have $\mu_{n+1}x=-\nu_{n+1}y$. Taking $2$-valuations we obtain $$v_2(\mu_{n+1})+v_2(b)=v_2(\nu_{n+1})+v_2(a)+1.$$ Then, $$v_2(\mu_{n+1})=v_2(\nu_{n+1})+v_2(a)+1-v_2(b)>0$$ because $v_2(a)\geq v_2(b)$. Hence, $\mu_{n+1}$ is even. On the other hand, from Corollary \ref{coro:relgcd0} we know that $h=\mu_nx+\nu_ny$. Since $h=g$, $v_2(\mu_nx+\nu_ny)=v_2(g)=v_2(b)$. Now, $v_2(\mu_nx)=v_2(\mu_n)+v_2(b)$ and $v_2(\nu_ny)=v_2(\nu_n)+v_2(a)+1$. If $\mu_n$ is even, we obtain that both $\mu_nx$ and $\nu_ny$ have $2$-valuation strictly greater than $v_2(b)$, contradicting that their sum has $2$-valuation exactly equal to $v_2(b)$. Therefore, $\mu_n$ is odd. Since $\mu_n\equiv1\,(\mathrm{mod}\,2)$ and $\mu_{n+1}\equiv0\,(\mathrm{mod}\,2)$, we can reduce immediately \eqref{eq:redmatrix4} to $$\begin{pmatrix}
        1 & 0 & 0 \\
        0 & g & 1 \\
        0 & 0 & 2
    \end{pmatrix}.$$

Now, suppose that $v_2(a)<v_2(b)$. Then $h=2g$ and $v_2(x)=v_2(b)\geq v_2(a)+1=v_2(y)$, so $v_2(h)=v_2(y)$ and then $\frac{y}{h}$ is odd. From Corollary \ref{coro:reln+1} \ref{coro:reln+1a} we obtain that $\mu_{n+1}$ is odd. Then, \eqref{eq:redmatrix4} reduces to $$\begin{pmatrix}
        1 & 0 & 0 \\
        0 & 2g & 0 \\
        0 & 0 & 1
    \end{pmatrix}.$$
\end{proof}

Applying Proposition \ref{pro:basisassocord}, we obtain the following.

\begin{coro} Suppose that $3\nmid a$. Then $I_W(\mathbb{Z}[\alpha])=2g$ and a $\mathbb{Z}$-basis of $\mathfrak{A}_H(\mathbb{Z}[\alpha])$ at each case is as follows.

\begin{center}
\begin{tabular}{|c|c|c|} \hline
    \hbox{Case} & $I_W(\mathbb{Z}[\alpha])$ & $\mathbb{Z}$-basis of $\mathfrak{A}_H(\mathbb{Z}[\alpha])$ \\ \hline
    $v_2(a)\geq v_2(b)$ & $2g$ & $\{w_1,\frac{w_2}{g},\frac{-w_2+gw_3}{2g}\}$ \\ \hline
    $v_2(a)<v_2(b)$ & $2g$ & $\{w_1,\frac{w_2}{2g},w_3\}$ \\ \hline
\end{tabular}
\end{center}
\end{coro}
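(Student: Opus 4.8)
The goal of this corollary is to read off both the module index $I_W(\mathbb{Z}[\alpha])$ and an explicit $\mathbb{Z}$-basis of the associated order directly from the reduced matrices computed in Proposition \ref{pro:redanotdiv3}, so the plan is simply to feed those matrices into the general machinery of Proposition \ref{pro:basisassocord} and compute. First I would recall that the module index $I_W(\mathbb{Z}[\alpha])$ equals $|\det(D)|$ for any reduced matrix $D$. In both cases the reduced matrix is upper triangular, so its determinant is the product of diagonal entries: in case $v_2(a)\geq v_2(b)$ the diagonal is $(1,g,2)$ giving $|\det D|=2g$, and in case $v_2(a)<v_2(b)$ the diagonal is $(1,2g,1)$ giving $|\det D|=2g$ again. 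This establishes $I_W(\mathbb{Z}[\alpha])=2g$ uniformly.

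Next I would compute $D^{-1}=(d_{ij})$ in each case, since Proposition \ref{pro:basisassocord} tells us the basis vectors are $v_i=\sum_{l=1}^{3}d_{li}w_l$, i.e. the $i$-th basis element reads off the $i$-th \emph{column} of $D^{-1}$ as coordinates against $W=\{w_1,w_2,w_3\}$. For the first case, inverting
$$D=\begin{pmatrix} 1 & 0 & 0 \\ 0 & g & 1 \\ 0 & 0 & 2 \end{pmatrix}$$
is a routine $3\times 3$ inversion of a triangular matrix; the columns of $D^{-1}$ then yield $v_1=w_1$, $v_2=\tfrac{1}{g}w_2$, and $v_3=\tfrac{1}{2g}(-w_2+g\,w_3)$, matching the table. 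For the second case the matrix is diagonal-like,
$$D=\begin{pmatrix} 1 & 0 & 0 \\ 0 & 2g & 0 \\ 0 & 0 & 1 \end{pmatrix},$$
whose inverse is immediate and gives $v_1=w_1$, $v_2=\tfrac{1}{2g}w_2$, $v_3=w_3$, again matching the table.

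The only genuine point of care, rather than a true obstacle, is bookkeeping: one must remember that in Proposition \ref{pro:basisassocord} the generator $v_i$ is built from the $i$-th \emph{column} of $D^{-1}$ (via $v_i=\sum_l d_{li}w_l$), not the $i$-th row, so the off-diagonal entry in the inverse of the first matrix lands in the generator $v_3$ as the combination $\tfrac{-w_2+g w_3}{2g}$ rather than affecting $v_2$. Once this indexing is handled correctly, the verification is purely mechanical, and the corollary follows immediately by invoking Proposition \ref{pro:basisassocord} together with the determinant computation of $I_W(\mathbb{Z}[\alpha])$.
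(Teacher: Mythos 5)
Your proposal is correct and is exactly the paper's argument: the paper derives this corollary by applying Proposition \ref{pro:basisassocord} to the reduced matrices of Proposition \ref{pro:redanotdiv3}, which is precisely your computation of $|\det(D)|=2g$ and of the columns of $D^{-1}$ as coordinates of the basis elements with respect to $W$. Your attention to the column-versus-row convention ($v_i=\sum_l d_{li}w_l$) is the right point of care, and your resulting bases match the table.
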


\subsection*{Case 2: $3\mid a$}

If $3\mid a$, \eqref{eq:redmatrix2} becomes \begin{equation}\label{eq:redmatrix5}
    \begin{pmatrix}
    1 & 0 & 2 \\
    0 & 6a & 0 \\
    0 & 9b & 3 \\
    0 & 0 & 6
\end{pmatrix}.
\end{equation} As before, let us call $g=\mathrm{gcd}(a,b)$ and let us determine $h=\mathrm{gcd}(6a,9b)$ in terms of $g$.

\begin{pro}\label{pro:valuesh} With the previous notation, $$h=\begin{cases}
    3g & \hbox{if }v_2(a)\geq v_2(b)\hbox{ and }v_3(a)\leq v_3(b). \\
    6g & \hbox{if }v_2(a)<v_2(b)\hbox{ and }v_3(a)\leq v_3(b). \\
    9g & \hbox{if }v_2(a)\geq v_2(b)\hbox{ and }v_3(a)>v_3(b). \\
    18g & \hbox{if }v_2(a)<v_2(b)\hbox{ and }v_3(a)>v_3(b). \\
\end{cases}$$
\end{pro}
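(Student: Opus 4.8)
The plan is to compute $h=\mathrm{gcd}(6a,9b)$ from $g=\mathrm{gcd}(a,b)$ by comparing $p$-adic valuations prime by prime, exactly as was done for $\mathrm{gcd}(2a,9b)$ in Case 1. Since $v_p(\mathrm{gcd}(m,n))=\min(v_p(m),v_p(n))$ for every prime $p$, it suffices to evaluate $v_p(h)-v_p(g)$ for each $p$ and then reassemble $h/g$ from these local contributions. Because $6=2\cdot3$ and $9=3^2$, the only primes at which $h$ and $g$ can differ are $p=2$ and $p=3$, so for any other prime the factors $6$ and $9$ are invisible and $v_p(h)=\min(v_p(a),v_p(b))=v_p(g)$.

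For $p=2$ the situation is identical to the one already settled in Case 1: from $v_2(6a)=v_2(a)+1$ and $v_2(9b)=v_2(b)$ one gets $v_2(h)=\min(v_2(a)+1,v_2(b))$, which equals $v_2(g)$ when $v_2(a)\geq v_2(b)$ and $v_2(g)+1$ when $v_2(a)<v_2(b)$. This recovers the $2$-part dichotomy appearing in the statement.

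The crux is $p=3$, where the hypothesis $3\mid a$ guarantees $v_3(a)\geq1$. Here $v_3(6a)=v_3(a)+1$ and $v_3(9b)=v_3(b)+2$, so $v_3(h)=\min(v_3(a)+1,v_3(b)+2)$. If $v_3(a)\leq v_3(b)$ then $v_3(a)+1\leq v_3(b)+1<v_3(b)+2$, whence $v_3(h)=v_3(a)+1=v_3(g)+1$; if instead $v_3(a)>v_3(b)$ then $v_3(a)+1\geq v_3(b)+2$, whence $v_3(h)=v_3(b)+2=v_3(g)+2$. The point to watch is that the extra factor $3$ in $9$ (as opposed to $6$) is what lets the $3$-valuation of $h$ exceed that of $g$ by two rather than one.

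Finally I would combine the two independent dichotomies: the factor $h/g$ contributes $2^{0}$ or $2^{1}$ according to $v_2(a)\geq v_2(b)$ or $v_2(a)<v_2(b)$, and $3^{1}$ or $3^{2}$ according to $v_3(a)\leq v_3(b)$ or $v_3(a)>v_3(b)$, with no contribution from any other prime. Reading off the four combinations gives $h/g\in\{3,6,9,18\}$ exactly as stated. The argument is essentially bookkeeping; the only real care needed is in the $p=3$ comparison of $v_3(a)+1$ with $v_3(b)+2$, ensuring that the two regimes $v_3(a)\leq v_3(b)$ and $v_3(a)>v_3(b)$ resolve the minimum cleanly.
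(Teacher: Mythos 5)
Your proof is correct and follows essentially the same route as the paper: a prime-by-prime comparison of valuations, noting that only $p=2$ and $p=3$ can contribute to $h/g$, and resolving $\min(v_2(a)+1,v_2(b))$ and $\min(v_3(a)+1,v_3(b)+2)$ in the two regimes of each dichotomy. The only cosmetic difference is that you invoke $v_p(\gcd(m,n))=\min(v_p(m),v_p(n))$ explicitly while the paper phrases the same comparisons as inequalities between $v_p(6a)$ and $v_p(9b)$; the content is identical.
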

\begin{proof}
If $p$ is a prime with $p\notin\{2,3\}$, then $v_p(a)=v_p(6a)$ and $v_p(b)=v_p(9b)$, so $v_p(h)=v_p(g)$. Hence, it is enough to analyze $v_p(g)$ and $v_p(h)$ with $p\in\{2,3\}$.

If $v_2(a)\geq v_2(b)$, then $v_2(6a)>v_2(9b)$, so $$v_2(h)=v_2(9b)=v_2(b)=v_2(g).$$ Otherwise, if $v_2(a)<v_2(b)$, then $v_2(6a)\leq v_2(9b)$, so $$v_2(h)=v_2(6a)=v_2(a)+1=v_2(g)+1.$$

On the other hand, if $v_3(a)\leq v_3(b)$, we have that $v_3(6a)<v_3(9b)$, so $$v_3(h)=v_3(6a)=v_3(a)+1=v_3(g)+1.$$ Otherwise, if $v_3(a)>v_3(b)$, then $v_3(6a)\geq v_3(9b)$, so $$v_3(h)=v_3(9b)=v_3(b)+2=v_3(g)+2.$$

Combining these possibilities gives the statement.
\end{proof}

\begin{pro} Assume that $3\mid a$. A reduced matrix for $M(H_W,L_B)$ is as follows:
\begin{itemize}
    \item[1.] If $v_2(a)\geq v_2(b)$ and $v_3(a)\leq v_3(b)$, $$\begin{pmatrix}
        1 & 0 & 2 \\
        0 & 3g & 3 \\
        0 & 0 & 6
    \end{pmatrix}.$$

    \item[2.] If $v_2(a)<v_2(b)$ and $v_3(a)\leq v_3(b)$, $$\begin{pmatrix}
        1 & 0 & 2 \\
        0 & 6g & 0 \\
        0 & 0 & 3
    \end{pmatrix}.$$

    \item[3.] If $v_2(a)\geq v_2(b)$ and $v_3(a)>v_3(b)$, $$\begin{pmatrix}
        1 & 0 & 2 \\
        0 & 9g & 3 \\
        0 & 0 & 6
    \end{pmatrix}.$$

    \item[4.] If $v_2(a)<v_2(b)$ and $v_3(a)>v_3(b)$, $$\begin{pmatrix}
        1 & 0 & 2 \\
        0 & 18g & 0 \\
        0 & 0 & 3
    \end{pmatrix}.$$
\end{itemize}
\end{pro}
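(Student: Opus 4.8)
The plan is to start from the matrix \eqref{eq:redmatrix5} obtained in the case $3\mid a$ and reduce its middle column via Euclid's algorithm applied to the pair $(9b,6a)$, exactly as was done for the $3\nmid a$ case in Proposition \ref{pro:redanotdiv3}. Concretely, I would set $x=9b$ and $y=6a$ and invoke Corollary \ref{coro:hermiteform} to transform \eqref{eq:redmatrix5} integrally into a matrix of the form
\begin{equation*}
\begin{pmatrix}
1 & 0 & 2 \\
0 & h & \mu_n \\
0 & 0 & \mu_{n+1} \\
0 & 0 & 6
\end{pmatrix},
\end{equation*}
where $h=\gcd(6a,9b)$ has the four possible values computed in Proposition \ref{pro:valuesh}. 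The whole argument then reduces to a divisibility analysis of the entries $\mu_n$ and $\mu_{n+1}$ against the last-row entry $6$, so that the two bottom rows can be combined into a single row and the matrix collapses to the stated $3\times 3$ reduced form.

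The key computational step is to determine $\mu_n \bmod 6$ and $\mu_{n+1}\bmod 6$ in each of the four subcases. For this I would use the same two tools as in Proposition \ref{pro:redanotdiv3}: the Bezout-type identity $h=\mu_n x+\nu_n y$ from Corollary \ref{coro:relgcd0}(\ref{coro:relgcd}), the vanishing relation $0=\mu_{n+1}x+\nu_{n+1}y$ from Corollary \ref{coro:relgcd0}(\ref{coro:rel0}), and the explicit formula $\mu_{n+1}=(-1)^n \tfrac{y}{r_n}=(-1)^n\tfrac{6a}{h}$ from Corollary \ref{coro:reln+1}(\ref{coro:reln+1a}). The strategy is to take $2$-adic and $3$-adic valuations of these identities separately. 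For the $2$-part, the comparison of $v_2(a)$ and $v_2(b)$ decides, just as before, whether $\mu_n$ is odd and $\mu_{n+1}$ even (when $v_2(a)\geq v_2(b)$, giving the cases with third column entry $3$) or vice versa (when $v_2(a)<v_2(b)$, where $\tfrac{6a}{h}$ is odd so $\mu_{n+1}$ is odd, allowing elimination of the last row against the middle one). For the $3$-part, the comparison of $v_3(a)$ and $v_3(b)$ determines whether $h$ picks up an extra factor of $3$ or $9$, which is precisely what distinguishes cases 1,2 from cases 3,4; here one checks that $\mu_n$ carries no factor of $3$ by the same valuation argument on $h=\mu_n x+\nu_n y$.

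Having pinned down the parities and $3$-divisibilities of $\mu_n,\mu_{n+1}$, the remaining reductions are routine elementary row operations. When $\mu_{n+1}$ is odd (cases 2 and 4, where $v_2(a)<v_2(b)$), I would use the row $(0,0,\mu_{n+1})$ together with $(0,0,6)$ and the fact that $\gcd(\mu_{n+1},6)$ divides $3$ to clear the third column down to a single entry $3$ and annihilate the off-diagonal entry in the middle column, yielding the forms with diagonal $(1,6g,3)$ and $(1,18g,3)$. When $\mu_{n+1}$ is even but $\mu_n$ is odd (cases 1 and 3), the last two rows reduce to a single row contributing the entry $6$ in the third column while $\mu_n\equiv\pm1$ can be normalized to $3$ modulo $6$ after combination, producing the forms with diagonal $(1,3g,6)$ and $(1,9g,6)$ and the off-diagonal $3$. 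I expect the main obstacle to be the bookkeeping in the $3$-adic valuation argument: unlike the $2$-adic case in Proposition \ref{pro:redanotdiv3}, here the factor $9b$ already contributes $v_3=2$ and $6a$ contributes $v_3(a)+1$, so the comparison $v_3(a)\lessgtr v_3(b)$ must be handled carefully to certify that $3\nmid\mu_n$ and to read off the correct power of $3$ in the final middle-column entry.
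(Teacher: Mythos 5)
Your overall strategy (reduce \eqref{eq:redmatrix5} by Euclid's algorithm on $x=9b$, $y=6a$ via Corollary \ref{coro:hermiteform}, then settle the bottom rows by a valuation analysis of $\mu_n$ and $\mu_{n+1}$) is the same as the paper's, but there is a concrete error at the very first step which the rest of your argument inherits. When Corollary \ref{coro:hermiteform} is applied to the rows $(0,9b,3)$ and $(0,6a,0)$, the parameters of that corollary are $\lambda=3$ and $\gamma=0$, so the new third-column entries are $\mu_n\lambda+\nu_n\gamma=3\mu_n$ and $\mu_{n+1}\lambda+\nu_{n+1}\gamma=3\mu_{n+1}$, \emph{not} $\mu_n$ and $\mu_{n+1}$ as in your displayed matrix. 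That factor of $3$ is exactly what makes the proposition true: since the relevant entries are already multiples of $3$, only the parities of $\mu_n$ and $\mu_{n+1}$ matter (determined exactly as in Proposition \ref{pro:redanotdiv3} by comparing $v_2(a)$ with $v_2(b)$), giving $3\mu_n\equiv 3\pmod 6$ when $\mu_n$ is odd and $\gcd(3\mu_{n+1},6)=3$ when $\mu_{n+1}$ is odd; no $3$-adic analysis of $\mu_n,\mu_{n+1}$ is needed at all.

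With your matrix, by contrast, the reduction genuinely fails. Take case 2, $v_2(a)<v_2(b)$ and $v_3(a)\leq v_3(b)$: there $h=6g$ and, by Corollary \ref{coro:reln+1}, $\mu_{n+1}=\pm 6a/h=\pm a/g$, which is coprime to both $2$ and $3$ (because $v_2(g)=v_2(a)$ and $v_3(g)=v_3(a)$ under these hypotheses). Hence $\gcd(\mu_{n+1},6)=1$, your rows $(0,0,\mu_{n+1})$ and $(0,0,6)$ collapse to $(0,0,1)$, and you end with a reduced matrix of determinant $6g$ instead of $18g$. The auxiliary claims you invoke to avoid this do not hold up: ``$\gcd(\mu_{n+1},6)$ divides $3$'' does not yield the entry $3$ when that gcd is $1$; the valuation argument at the prime $3$ applied to $h=\mu_n x+\nu_n y$ shows $3\nmid\nu_n$ (it is the term $\nu_n\cdot 6a$ that must carry the minimal $3$-adic valuation), not $3\nmid\mu_n$; and an odd $\mu_n$ with $3\nmid\mu_n$ satisfies $\mu_n\equiv\pm1\pmod 6$, which cannot be ``normalized to $3$ modulo $6$'' by adding multiples of $6$. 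Restoring the factor $3$ in the third column removes the need for all of these claims and reduces your argument to the paper's proof.
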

\begin{proof}
We apply Euclid's algorithm with $x=9b$ and $y=6a$. By Corollary \ref{coro:hermiteform}, \ref{eq:redmatrix5} can be transformed integrally into \begin{equation}\label{eq:redmatrix6}
    \begin{pmatrix}
    1 & 0 & 2 \\
    0 & h & 3\mu_n \\
    0 & 0 & 3\mu_{n+1} \\
    0 & 0 & 6
\end{pmatrix}.
\end{equation}
Assume that $v_2(a)\geq v_2(b)$. Arguing as in the analogous case in the proof of Proposition \ref{pro:redanotdiv3}, we prove that $\mu_{n+1}$ is even. On the other hand, Corollary \ref{coro:relgcd0} \ref{coro:relgcd}, we have $h=\mu_nx+\nu_ny$. By Proposition \ref{pro:valuesh}, $h=3g$ if $v_3(a)\geq v_3(b)$ and $h=9g$ if $v_3(a)>v_3(b)$, so $v_2(h)=v_2(g)=v_2(b)$ in both cases. Then, $v_2(b)=v_2(\mu_n9b+\nu_n6a)$ with $v_2(\mu_n9b)=v_2(\mu_n)+v_2(b)$ and $v_2(\nu_n6a)=v_2(\nu_n)+v_2(a)+1>v_2(b)$. Necessarily, $v_2(\mu_n)=0$, that is, $\mu_n$ is odd. Then \eqref{eq:redmatrix6} reduces to $$\begin{pmatrix}
        1 & 0 & 2 \\
        0 & h & 3 \\
        0 & 0 & 6
    \end{pmatrix}.$$

Now, suppose that $v_2(a)<v_2(b)$. Arguing as in the analogous case in proof of Proposition \ref{pro:redanotdiv3}, we obtain that $\mu_{n+1}$ is odd, and then \eqref{eq:redmatrix6} reduces to $$\begin{pmatrix}
        1 & 0 & 2 \\
        0 & h & 0 \\
        0 & 0 & 3
    \end{pmatrix}.$$ The proof is finished by taking into account the values of $h$ at Proposition \ref{pro:valuesh}.
\end{proof}

\begin{coro} Suppose that $B$ is an integral basis and $3\mid a$. The index $I_W(\mathbb{Z}[\alpha])$ and a $\mathbb{Z}$-basis of $\mathfrak{A}_H(\mathbb{Z}[\alpha])$ are as follows: 

\begin{center}
\begin{tabular}{|c|c|c|} \hline
    \hbox{Case} & $I_W(\mathbb{Z}[\alpha])$ & $\mathbb{Z}$-basis of $\mathfrak{A}_H(\mathbb{Z}[\alpha])$ \\ \hline
    $v_2(a)\geq v_2(b)$ and $v_3(a)\leq v_3(b)$ & $18g$ & $\{w_1,\frac{w_2}{3g},\frac{-2gw_1-w_2+gw_3}{6}\}$ \\ \hline
    $v_2(a)<v_2(b)$ and $v_3(a)\leq v_3(b)$ & $18g$ & $\{w_1,\frac{w_2}{6g},\frac{-2w_1+w_3}{3}\}$ \\ \hline
    $v_2(a)\geq v_2(b)$ and $v_3(a)>v_3(b)$ & $54g$ & $\{w_1,\frac{w_2}{9g},\frac{-6gw_1-w_2+3gw_3}{18g}\}$ \\ \hline
    $v_2(a)<v_2(b)$ and $v_3(a)>v_3(b)$ & $54g$ & $\{w_1,\frac{w_2}{18g},\frac{-2w_1+w_3}{3}\}$ \\ \hline
\end{tabular}
\end{center}

\end{coro}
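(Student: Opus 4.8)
The plan is to feed the four reduced matrices produced in the preceding proposition directly into Proposition \ref{pro:basisassocord} together with the index formula $I_W(\mathbb{Z}[\alpha])=|\det(D)|$ recorded immediately after it. Each reduced matrix $D$ is upper triangular, so its determinant is the product of the diagonal entries. In the two cases with $v_3(a)\leq v_3(b)$ these products are $1\cdot 3g\cdot 6$ and $1\cdot 6g\cdot 3$, both equal to $18g$; in the two cases with $v_3(a)>v_3(b)$ they are $1\cdot 9g\cdot 6$ and $1\cdot 18g\cdot 3$, both equal to $54g$. This already yields the middle column of the table and explains why the index depends only on the comparison of $3$-adic valuations: passing from $v_2(a)\geq v_2(b)$ to $v_2(a)<v_2(b)$ merely shifts a factor between the second and third diagonal entries without changing their product.

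For the last column I would compute $D^{-1}$ explicitly in each case and invoke Proposition \ref{pro:basisassocord}, which says that, writing $D^{-1}=(d_{ij})$, a $\mathbb{Z}$-basis of $\mathfrak{A}_H(\mathbb{Z}[\alpha])$ is $v_i=\sum_{l=1}^3 d_{li}w_l$; that is, the $i$-th basis element is obtained by pairing the $i$-th column of $D^{-1}$ against $\{w_1,w_2,w_3\}$. Since $D$ is upper triangular with top-left entry $1$ and vanishing first column below the diagonal, back-substitution (equivalently, the adjugate formula) makes the inversion routine: the first column of $D^{-1}$ is $(1,0,0)^{t}$, so $v_1=w_1$; the second column is supported only in its middle slot by the reciprocal of the $(2,2)$-entry, so $v_2$ equals $w_2$ divided by $3g$, $6g$, $9g$ or $18g$ according to the case; and the third column carries the off-diagonal contributions coming from the $2$ in the top row and the $3$ in the middle row, producing the displayed combination $v_3$ after clearing denominators.

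There is no genuine obstacle here: the argument is mechanical linear algebra over $\mathbb{Q}$, and the four cases differ only in the numerical diagonal and off-diagonal entries. The only point demanding care is the bookkeeping of fractions when writing $v_3$ as a single integral combination over a common denominator, and confirming that this denominator is the one announced in the table. As a built-in consistency check I would verify in each case that $|\det(D)|$ computed from the diagonal matches $I_W(\mathbb{Z}[\alpha])$ obtained independently, and that $DD^{-1}=\mathrm{Id}$, which simultaneously validates the index column and rules out an arithmetic slip in the inversion.
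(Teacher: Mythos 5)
Your proposal is correct and is precisely the paper's (implicit) argument: the corollary follows by feeding the four reduced matrices of the preceding proposition into Proposition \ref{pro:basisassocord} and the index formula $I_W(\mathbb{Z}[\alpha])=|\det(D)|$, exactly as you describe, and your determinant computations for the middle column are right. One remark worth recording: if you actually carry out the inversion in the first case ($v_2(a)\geq v_2(b)$, $v_3(a)\leq v_3(b)$), the third column of $D^{-1}$ is $\left(-\tfrac{1}{3},\,-\tfrac{1}{6g},\,\tfrac{1}{6}\right)^{t}$, giving $v_3=\frac{-2gw_1-w_2+gw_3}{6g}$; the denominator $6$ printed in the table is a misprint (correct only when $g=1$), and your own built-in checks --- either $DD^{-1}=\mathrm{Id}$ or comparing the index of the lattice spanned by the displayed elements with $18g$ --- would flag exactly this discrepancy.
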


Note that the inequality $v_3(a)>v_3(b)$ implies that $3\mid a$, so the latter is redundant. Hence, we have obtained $$I_W(\mathbb{Z}[\alpha])=\begin{cases}
    2g & \hbox{if }3\nmid a, \\
    18g & \hbox{if }3\mid a\hbox{ and }v_3(a)\leq v_3(b), \\
    54g & \hbox{if }v_3(a)>v_3(b).
\end{cases}$$

\section{Freeness over $\mathfrak{A}_H(\mathbb{Z}[\alpha])$}\label{sec:freeness}

In this section, we study the $\mathfrak{A}_H(\mathbb{Z}[\alpha])$-freeness of $\mathbb{Z}[\alpha]$, and in particular we shall prove Theorem \ref{thm:maintheorem}. In addition, we shall specialize our findings to the case that $\mathcal{O}_L=\mathbb{Z}[\alpha]$.

Let $\beta=\beta_1+\beta_2\alpha+\beta_3\alpha^2\in\mathbb{Z}[\alpha]$, where $\beta_i\in\mathbb{Z}$ for $i\in\{1,2,3\}$. Then $$M_{\beta}(H_W,L_B)=\begin{pmatrix}
    \beta_1 & -4a^2\beta_2+6ab\beta_3 & 2\beta_1+2a\beta_3 \\
    \beta_2 & 9b\beta_2-2a^2\beta_3 & -\beta_2 \\
    \beta_3 & 6a\beta_2-9b\beta_3 & -\beta_3
\end{pmatrix}.$$

The determinant of this matrix is $$D_{\beta}(H,L)\coloneqq D_{\beta}(H_W,L_B)=2(3\beta_1+2a\beta_3)(3a\beta_2^2-9b\beta_2\beta_3+a^2\beta_3^2).$$ Recall from Proposition \ref{pro:criterionfreeness} that $\beta$ is an $\mathfrak{A}_H(\mathbb{Z}[\alpha])$-free generator of $\mathbb{Z}[\alpha]$ if and only if $|D_{\beta}(H,L)|=I_W(\mathbb{Z}[\alpha])$.

\begin{thm}\label{thm:maintheorem1} Suppose that $3\nmid a$. The following statements are equivalent:
\begin{enumerate}
    \item\label{thm:main11} $\mathbb{Z}[\alpha]$ is $\mathfrak{A}_H(\mathbb{Z}[\alpha])$-free.
    \item\label{thm:main12} There are $\beta_1,\beta_2,\beta_3\in\mathbb{Z}$ and $r,s\in\{-1,1\}$ such that $$\begin{cases}
        3\beta_1+2a\beta_3=r \\
        3\frac{a}{g}\beta_2^2-9\frac{b}{g}\beta_2\beta_3+\frac{a^2}{g}\beta_3^2=s
    \end{cases}$$
    \item\label{thm:main13} At least one of the equations $x^2+3\Delta y^2=\pm12ag$ has a solution $(x,y)\in\mathbb{Z}^2$ such that $6a\mid 9by+x$ or $9by-x$ and $3\nmid y$.
\end{enumerate}
If this is the case, a free generator is given by $\beta=\beta_1+\beta_2\alpha+\beta_3\alpha^2$, where $$\beta_1=\frac{\pm1-2ay}{3},\quad\beta_2=\frac{9by\pm x}{6a},\quad\beta_3=y.$$
\end{thm}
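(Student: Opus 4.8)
The plan is to establish the chain of equivalences \ref{thm:main11} $\Leftrightarrow$ \ref{thm:main12} $\Leftrightarrow$ \ref{thm:main13} by working with the explicit determinant formula $D_{\beta}(H,L)=2(3\beta_1+2a\beta_3)(3a\beta_2^2-9b\beta_2\beta_3+a^2\beta_3^2)$ together with the value $I_W(\mathbb{Z}[\alpha])=2g$ computed in Section \ref{sec:assocorders} for the case $3\nmid a$. By Proposition \ref{pro:criterionfreeness}, $\mathbb{Z}[\alpha]$ is free if and only if there exists $\beta\in\mathbb{Z}[\alpha]$ with $|D_{\beta}(H,L)|=2g$, i.e. $|(3\beta_1+2a\beta_3)(3a\beta_2^2-9b\beta_2\beta_3+a^2\beta_3^2)|=g$. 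The first step is to recognize that $g=\gcd(a,b)$ divides the second factor $3a\beta_2^2-9b\beta_2\beta_3+a^2\beta_3^2$ (both $a$ and $a^2$ carry a factor of $g$, and $9b$ does too), so the product of integers having absolute value $g$ forces the first factor to be a unit: $3\beta_1+2a\beta_3=r\in\{-1,1\}$, and consequently $\frac{1}{g}(3a\beta_2^2-9b\beta_2\beta_3+a^2\beta_3^2)=s\in\{-1,1\}$. This yields \ref{thm:main11} $\Rightarrow$ \ref{thm:main12}, and the reverse implication is immediate since the stated system makes $|D_\beta|=2g$. I would present this divisibility argument carefully, since it is the conceptual heart of why the problem reduces to a unit-times-unit factorization.

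Next I would pass from \ref{thm:main12} to \ref{thm:main13}. The first equation $3\beta_1+2a\beta_3=r$ is a linear Diophantine condition: given $\beta_3=y$, it is solvable for $\beta_1\in\mathbb{Z}$ if and only if $3\mid r-2ay$, and since $\gcd(3,2a)=1$ (as $3\nmid a$) one can always choose $y$ appropriately; this explains the formula $\beta_1=\frac{\pm1-2ay}{3}$ and the constraint that governs divisibility by $3$. The substantive transformation is on the second equation. Setting $\beta_3=y$, I would complete the square in $\beta_2$: writing $3a\beta_2^2-9b\beta_2 y+a^2 y^2=sg$ and multiplying through by $12a$ gives $(6a\beta_2-9by)^2+(12a\cdot a^2-81b^2)y^2=12asg$, so that with $x:=6a\beta_2-9by$ the quadratic form becomes $x^2+(12a^3-81b^2)y^2=12asg$. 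Recognizing $12a^3-81b^2=3(4a^3-27b^2)=3\Delta$ converts this into $x^2+3\Delta y^2=\pm 12ag$, which is exactly the Pell-type equation in \ref{thm:main13}. The relation $x=6a\beta_2-9by$ is then equivalent to $\beta_2=\frac{x+9by}{6a}\in\mathbb{Z}$, i.e. the divisibility condition $6a\mid 9by+x$ (or $9by-x$, absorbing the sign ambiguity of $x$), and $\beta_2$ being an integer recovers the stated formula.

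The main obstacle I anticipate is bookkeeping the two separate sign choices $r,s\in\{-1,1\}$ and the $\pm$ in the square-completion so that they match the single sign on $\pm12ag$ and the \enquote{$9by+x$ or $9by-x$} disjunction, while simultaneously tracking the extra divisibility-by-$3$ conditions. Specifically, the condition $3\mid r-2ay$ needed to solve for $\beta_1\in\mathbb{Z}$ must be reconciled with the stated requirement $3\nmid y$; I would need to verify that the solvability of $3\beta_1+2ay=r$ for some $r\in\{\pm1\}$ together with the shape of the Pell solution forces $3\nmid y$, which is where the hypothesis $3\nmid a$ is used decisively (modulo $3$, the form $x^2+3\Delta y^2\equiv x^2$ and $12ag\equiv 0$, so a nontrivial analysis modulo $3$ pins down the behavior of $x$ and $y$). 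I would handle the sign cases by showing that replacing $x$ with $-x$ interchanges the two divisibility alternatives and that the sign $s$ corresponds to the sign of $12ag$, so that the disjunctive phrasing in \ref{thm:main13} captures all admissible $(r,s)$ combinations. The final formulas for $\beta_1,\beta_2,\beta_3$ then follow by back-substitution, and I would close by checking integrality of each $\beta_i$ against the divisibility hypotheses to confirm $\beta$ genuinely lies in $\mathbb{Z}[\alpha]$.
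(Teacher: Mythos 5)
Your proposal is correct and follows essentially the same route as the paper: reduce to $|D_\beta(H,L)|=I_W(\mathbb{Z}[\alpha])=2g$, split into the unit conditions on the linear and quadratic factors (your explicit remark that $g$ divides the quadratic factor is the justification the paper leaves implicit), and convert the quadratic condition into $x^2+3\Delta y^2=\pm12ag$ — your completion of the square via multiplication by $12a$ with $x=6a\beta_2-9by$ is algebraically identical to the paper's discriminant computation, and the $3\nmid y$ condition arises, as you say, from solvability of $3\beta_1+2ay=r$ using $3\nmid a$. The only inessential blemish is your parenthetical suggesting a mod-$3$ analysis of the Pell equation is needed to pin down $y$; it is not, since $3\nmid y$ is exactly equivalent to the integrality of $\beta_1$ and is an extra hypothesis in statement (3), not a consequence of the Pell equation.
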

\begin{proof}
We know that $I_W(\mathbb{Z}[\alpha])=2g$. Then $|D_{\beta}(H,L)|=I_W(\mathbb{Z}[\alpha])$ if and only if \begin{equation}\label{eq:mainthm1}
        (3\beta_1+2a\beta_3)\Big(3\frac{a}{g}\beta_2^2-9\frac{b}{g}\beta_2\beta_3+\frac{a^2}{g}\beta_3^2\Big)=\pm1,
    \end{equation} This is equivalent to
    \begin{equation}\label{eq:mainthm1l}
        3\beta_1+2a\beta_3=r,
    \end{equation}
    \begin{equation}\label{eq:mainthm1q}
        3\frac{a}{g}\beta_2^2-9\frac{b}{g}\beta_2\beta_3+\frac{a^2}{g}\beta_3^2=s,
    \end{equation} where $r,s\in\{-1,1\}$. This proves the equivalence between \ref{thm:main11} and \ref{thm:main12}.

    Let us prove that \ref{thm:main12} and \ref{thm:main13} are equivalent. We treat \eqref{eq:mainthm1q} as a second degree equation on $\beta_2$ with parameter $\beta_3$. Then, it has discriminant \begin{equation*}
        \begin{split}
            \delta&=\Big(\frac{9b}{g}\Big)^2\beta_3^2-4\frac{3a}{g}\Big(\frac{a^2}{g}\beta_3^2-s\Big)
            \\&=\frac{81b^2}{g^2}\beta_3^2-\frac{12a^3}{g^2}\beta_3^2+\frac{12as}{g}
            \\&=\frac{81b^2-12a^3}{g^2}\beta_3^2+\frac{12as}{g}
            \\&=\frac{-3\Delta\beta_3^2+12ags}{g^2}
        \end{split}
    \end{equation*} This is a perfect square if and only if the equation $x^2+3\Delta y^2=12ags$ has some solution $(x,y)$. Suppose that this is the case. Then, the equation \eqref{eq:mainthm1q} has solutions $$\frac{9b\beta_3\pm x}{6a}.$$ At least one of these roots is an integer number if and only if $6a\mid 9b+x$ or $9b-x$. If that is the case, pick $\beta_3=y$ and $\beta_2=\frac{9b\pm x}{6a}$. Then the equation \eqref{eq:mainthm1l} on $\beta_1$ has solution $\beta_1=\frac{r-2ay}{3}$, which is an integer if and only if $3\mid 1-2ay$ or $3\mid-1-2ay$. Let us see that this is equivalent to $3\nmid y$. Indeed, it holds if and only if $3\mid(1-2ay)(-1-2ay)=4a^2y^2-1$, which is equivalent to $a^2y^2\equiv1\,(\mathrm{mod}\,3)$. Now, note that $a^2\equiv1\,(\mathrm{mod}\,3)$ because $3\nmid a$. Hence, $3\mid 1-2ay$ or $3\mid-1-2ay$ if and only if $y^2\equiv1\,(\mathrm{mod}\,3)$, that is, $3\nmid y$. Therefore, \eqref{eq:mainthm1l} and \eqref{eq:mainthm1q} are satisfied if and only if the conditions at Theorem \ref{thm:maintheorem1} \ref{thm:main13} hold, in which case it does for the above choices of $\beta_1$, $\beta_2$ and $\beta_3$.
\end{proof}

\begin{thm}\label{thm:maintheorem2} Suppose that $3\mid a$ and $v_3(a)\leq v_3(b)$. The following statements are equivalent:
\begin{enumerate}
    \item $\mathbb{Z}[\alpha]$ is $\mathfrak{A}_H(\mathbb{Z}[\alpha])$-free.
    \item The quadratic form $[\frac{a}{g},-3\frac{b}{g},\frac{a^2}{3g}]$ represents $1$ or $-1$.
    \item At least one of the equations $x^2+3\Delta y^2=\pm36ag$ has a solution $(x,y)\in\mathbb{Z}^2$ such that $6a\mid 9by+x$ or $9by-x$.
\end{enumerate}
If this is the case, a free generator is given by $\beta=\beta_1+\beta_2\alpha+\beta_3\alpha^2$, where $$\beta_1=1-\frac{2a}{3}y,\quad\beta_2=\frac{9by\pm x}{6a},\quad\beta_3=y.$$
\end{thm}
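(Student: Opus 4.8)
The plan is to run the argument of Theorem \ref{thm:maintheorem1} in parallel, now with $I_W(\mathbb{Z}[\alpha])=18g$. By Proposition \ref{pro:criterionfreeness}, $\beta=\beta_1+\beta_2\alpha+\beta_3\alpha^2$ is an $\mathfrak{A}_H(\mathbb{Z}[\alpha])$-free generator exactly when $|D_{\beta}(H,L)|=18g$, where $D_{\beta}(H,L)=2(3\beta_1+2a\beta_3)(3a\beta_2^2-9b\beta_2\beta_3+a^2\beta_3^2)$. The first step is to extract the powers of $3$ that are now forced by $3\mid a$. The linear factor satisfies $3\beta_1+2a\beta_3=3\big(\beta_1+\tfrac{2a}{3}\beta_3\big)$ with $\beta_1+\tfrac{2a}{3}\beta_3\in\mathbb{Z}$, and I would check that the quadratic factor equals $3g\,Q(\beta_2,\beta_3)$, where $Q=[\tfrac{a}{g},-3\tfrac{b}{g},\tfrac{a^2}{3g}]$ is the form in statement $2$. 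Substituting both, $D_{\beta}(H,L)=18g\,\big(\beta_1+\tfrac{2a}{3}\beta_3\big)\,Q(\beta_2,\beta_3)$, so the freeness condition becomes $\big|\beta_1+\tfrac{2a}{3}\beta_3\big|\cdot|Q(\beta_2,\beta_3)|=1$, i.e. $\beta_1+\tfrac{2a}{3}\beta_3=r$ and $Q(\beta_2,\beta_3)=s$ with $r,s\in\{-1,1\}$.

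For the equivalence of statements $1$ and $2$, I would observe that the linear equation is always solvable: for any $\beta_3$ and any sign $r$, the value $\beta_1=r-\tfrac{2a}{3}\beta_3$ is an integer precisely because $3\mid a$. Hence a free generator exists if and only if there are $\beta_2,\beta_3$ with $Q(\beta_2,\beta_3)=\pm1$, that is, $Q$ represents $1$ or $-1$. This is also the structural reason why, unlike in Theorem \ref{thm:maintheorem1}, no congruence restriction on $\beta_3$ appears in statement $3$: there $3\nmid a$ forced $3\nmid y$ in order for $\beta_1=\tfrac{r-2ay}{3}$ to be integral, whereas here the integrality of $\beta_1=r-\tfrac{2a}{3}\beta_3$ is automatic.

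For the equivalence of statements $2$ and $3$, I would treat $Q(\beta_2,\beta_3)=s$ as a quadratic in $\beta_2$ with parameter $\beta_3$, as in Theorem \ref{thm:maintheorem1}. A short computation using $27b^2-4a^3=-\Delta$ gives the discriminant
$$\delta=\Big(3\tfrac{b}{g}\beta_3\Big)^2-4\tfrac{a}{g}\Big(\tfrac{a^2}{3g}\beta_3^2-s\Big)=\frac{-\Delta\beta_3^2+12ags}{3g^2}.$$
The quadratic has a rational solution $\beta_2=\tfrac{3b\beta_3\pm g\sqrt{\delta}}{2a}$ exactly when $\delta$ is a perfect square; setting $x^2=9g^2\delta$ converts this into $x^2+3\Delta y^2=36ags=\pm36ag$ with $y=\beta_3$, and the roots become $\beta_2=\tfrac{9by\pm x}{6a}$, integral precisely when $6a\mid 9by+x$ or $6a\mid 9by-x$. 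This is statement $3$, and retracing the steps produces the displayed free generator with $\beta_1=1-\tfrac{2a}{3}y$.

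The main obstacle I expect is the $3$-adic bookkeeping, which must be done carefully at two points. First, one must confirm that $Q$ genuinely has integer coefficients: for primes $p\neq3$ this is immediate, while at $p=3$ the hypothesis $v_3(a)\leq v_3(b)$ forces $v_3(g)=v_3(a)$, whence $v_3\big(\tfrac{a^2}{3g}\big)=v_3(a)-1\geq0$ since $3\mid a$. This is exactly where the assumption $v_3(a)\leq v_3(b)$ is used, and it is what distinguishes this case from the case $v_3(a)>v_3(b)$. Second, one must track the correct scaling $x^2=9g^2\delta$ (rather than the $x^2=g^2\delta$ of Theorem \ref{thm:maintheorem1}), which is precisely what turns the right-hand side $\pm12ag$ into $\pm36ag$; getting this factor right is the only delicate part of the otherwise routine verification.
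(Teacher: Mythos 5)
Your proposal is correct and follows essentially the same route as the paper's proof: factor $D_{\beta}(H,L)=18g\big(\beta_1+\tfrac{2a}{3}\beta_3\big)Q(\beta_2,\beta_3)$, note the linear equation is trivially solvable since $3\mid a$, and convert $Q(\beta_2,\beta_3)=\pm1$ into the Pell-type equation via the discriminant (your $\delta=\tfrac{-\Delta\beta_3^2+12ags}{3g^2}$ is the same as the paper's $\tfrac{-3\Delta\beta_3^2+36ags}{9g^2}$, just unreduced differently). Your extra checks — that $v_3(a)\leq v_3(b)$ forces $v_3(g)=v_3(a)$ so $\tfrac{a^2}{3g}\in\mathbb{Z}$, and the explanation of why no condition $3\nmid y$ appears here — are details the paper leaves implicit, and they strengthen rather than alter the argument.
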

\begin{proof}
We know that $I_W(\mathbb{Z}[\alpha])=18g$. We proceed in a similar way.  We have that $\beta$ is an $\mathfrak{A}_H(\mathbb{Z}[\alpha])$-generator of $\mathbb{Z}[\alpha]$ if and only if \begin{equation}\label{eq:mainthm2}
\Big(\beta_1+\frac{2a}{3}\beta_3\Big)\Big(\frac{a}{g}\beta_2^2-3\frac{b}{g}\beta_2\beta_3+\frac{a^2}{3g}\beta_3^2\Big)=\pm1.
\end{equation} It is immediate that this equation is solvable if and only if for $s\in\{-1,1\}$, the equation $$\frac{a}{g}\beta_2^2-3\frac{b}{g}\beta_2\beta_3+\frac{a^2}{3g}\beta_3^2=s$$ on $\beta_2$ and $\beta_3$ has integer solutions. We treat it as an equation on $\beta_2$ with parameter $\beta_3$, and then it has discriminant $$\delta=\frac{-3\Delta\beta_3^2+36ags}{9g^2},$$ which is a perfect square if and only if the equation $$x^2+3\Delta y^2=36ags$$ has some solution $(x,y)\in\mathbb{Z}^2$. In that case, \eqref{eq:mainthm2} is satisfied only for $\beta_1=1-\frac{2a}{3}y$, $\beta_2=\frac{9by\pm x}{6a}$ and $\beta_3=y$, and $\beta_2\in\mathbb{Z}$ if and only if $6a\mid 9by+x$ or $9by-x$.
\end{proof}

\begin{thm}\label{thm:maintheorem3} Suppose that $v_3(a)>v_3(b)$. The following statements are equivalent:
\begin{enumerate}
    \item $\mathbb{Z}[\alpha]$ is $\mathfrak{A}_H(\mathbb{Z}[\alpha])$-free.
    \item The quadratic form $[\frac{a}{3g},-\frac{b}{g},\frac{a^2}{9g}]$ represents $1$ or $-1$.
    \item At least one of the equations $x^2+3\Delta y^2=\pm108ag$ has a solution $(x,y)\in\mathbb{Z}^2$ such that $6a\mid 9by+x$ or $9by-x$.
\end{enumerate}
If this is the case, a free generator is given by $\beta=\beta_1+\beta_2\alpha+\beta_3\alpha^2$, where $$\beta_1=1-\frac{2a}{3}y,\quad\beta_2=\frac{9by\pm x}{6a},\quad\beta_3=y.$$
\end{thm}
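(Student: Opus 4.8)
The plan is to mirror the structure of the proofs of Theorems \ref{thm:maintheorem1} and \ref{thm:maintheorem2}, since the three cases of the main theorem are handled by essentially the same argument applied to different index values and different normalizations of the quadratic form. The starting point is the freeness criterion from Proposition \ref{pro:criterionfreeness}: $\beta$ is an $\mathfrak{A}_H(\mathbb{Z}[\alpha])$-free generator if and only if $|D_{\beta}(H,L)|=I_W(\mathbb{Z}[\alpha])$. In the present case $v_3(a)>v_3(b)$, so we use the value $I_W(\mathbb{Z}[\alpha])=54g$ computed at the end of Section \ref{sec:assocorders}. Substituting the formula $D_{\beta}(H,L)=2(3\beta_1+2a\beta_3)(3a\beta_2^2-9b\beta_2\beta_3+a^2\beta_3^2)$ and dividing through by the appropriate power of $g$ and $3$, I would rewrite the freeness condition as
\begin{equation*}
\Big(\beta_1+\tfrac{2a}{3}\beta_3\Big)\Big(\tfrac{a}{3g}\beta_2^2-\tfrac{b}{g}\beta_2\beta_3+\tfrac{a^2}{9g}\beta_3^2\Big)=\pm1.
\end{equation*}
Here the key arithmetic input is that $v_3(a)>v_3(b)$ guarantees $9g\mid a^2$ and $3g\mid a$, so that the coefficients of the displayed quadratic form are genuine integers; this is the analogue of the integrality checks implicit in the earlier cases and is where the hypothesis $v_3(a)>v_3(b)$ is actually used. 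The equivalence between the freeness of $\mathbb{Z}[\alpha]$ and the representability of $\pm1$ by the form $[\frac{a}{3g},-\frac{b}{g},\frac{a^2}{9g}]$ then follows, since the first factor $\beta_1+\frac{2a}{3}\beta_3$ can always be set to $\pm1$ by an appropriate integer choice of $\beta_1$ once $\beta_3$ is fixed.

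Next I would translate the representability statement into the Pell-type equation. Treating the quadratic $\frac{a}{3g}\beta_2^2-\frac{b}{g}\beta_2\beta_3+\frac{a^2}{9g}\beta_3^2=s$ (with $s\in\{-1,1\}$) as a quadratic in $\beta_2$ with parameter $\beta_3$, its discriminant is, after the same algebraic simplification used in Theorem \ref{thm:maintheorem2},
\begin{equation*}
\delta=\frac{b^2}{g^2}\beta_3^2-4\cdot\frac{a}{3g}\Big(\frac{a^2}{9g}\beta_3^2-s\Big)=\frac{-3\Delta\beta_3^2+108ags}{27g^2},
\end{equation*}
using $\Delta=4a^3-27b^2$. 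This is a perfect square precisely when $x^2+3\Delta y^2=108ags$ is solvable, with $\beta_3=y$ and the square root equal to $x/(3g)$ up to sign. The integrality of the two roots $\beta_2=\frac{9by\pm x}{6a}$ is then exactly the divisibility condition $6a\mid 9by+x$ or $9by-x$, and $\beta_1=1-\frac{2a}{3}y$ is automatically an integer because $3\mid a$ in this regime (so no extra congruence on $y$ is needed, in contrast to Case 1). Collecting these equivalences across $s=1$ and $s=-1$ yields the statement that at least one of $x^2+3\Delta y^2=\pm108ag$ is solvable under the stated divisibility constraint.

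The main obstacle I anticipate is bookkeeping rather than conceptual: one must verify carefully that the rescaled quadratic form has integer coefficients in this case, which requires the precise valuation inequalities $v_3(a)>v_3(b)$ to control $v_3$ of $\frac{a}{3g}$ and $\frac{a^2}{9g}$, and one must correctly track the factor $27g^2$ versus the $9g^2$ and $g^2$ appearing in Theorems \ref{thm:maintheorem2} and \ref{thm:maintheorem1}, so that the right-hand side $108ag$ emerges rather than $36ag$ or $12ag$. A secondary subtlety is confirming that the first factor $\beta_1+\frac{2a}{3}\beta_3$ genuinely decouples: since $\frac{2a}{3}$ is an integer when $3\mid a$, fixing $\beta_3=y$ and solving $\beta_1+\frac{2a}{3}y=\pm1$ always has an integer solution, so the sign of that factor is free and the whole constraint reduces to the quadratic form representing $\pm1$. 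Given these verifications, the proof concludes in the same way as the two preceding theorems.
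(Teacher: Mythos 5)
Your strategy is exactly the paper's: invoke Proposition \ref{pro:criterionfreeness} with $I_W(\mathbb{Z}[\alpha])=54g$, factor the freeness condition as $\bigl(\beta_1+\tfrac{2a}{3}\beta_3\bigr)\bigl(\tfrac{a}{3g}\beta_2^2-\tfrac{b}{g}\beta_2\beta_3+\tfrac{a^2}{9g}\beta_3^2\bigr)=\pm1$, decouple the linear factor (legitimate since $3\mid a$), and convert the quadratic condition into a Pell-type equation via its discriminant. Your explicit check that $\tfrac{a}{3g}$ and $\tfrac{a^2}{9g}$ are integers under $v_3(a)>v_3(b)$ is a worthwhile point that the paper leaves implicit. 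However, there is a concrete arithmetic error in the discriminant, and it breaks the step that produces statement 3. The correct computation is
\begin{equation*}
\delta=\frac{b^2}{g^2}\beta_3^2-\frac{4a}{3g}\Bigl(\frac{a^2}{9g}\beta_3^2-s\Bigr)
=\frac{(27b^2-4a^3)\beta_3^2+36ags}{27g^2}
=\frac{-\Delta\beta_3^2+36ags}{27g^2}
=\frac{-3\Delta\beta_3^2+108ags}{81g^2},
\end{equation*}
so the denominator is $81g^2$ (as in the paper), not the $27g^2$ you wrote; your displayed expression is three times too large.

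This is not merely cosmetic, because your subsequent identification is inconsistent with your own formula: pairing your denominator $27g^2$ with the square root $x/(3g)$ forces $3x^2=-3\Delta y^2+108ags$, i.e.\ $x^2+\Delta y^2=36ags$, which is a genuinely different Diophantine equation from $x^2+3\Delta y^2=108ags$ (solvability of the latter is equivalent to that of $3x^2+\Delta y^2=36ags$, not of $x^2+\Delta y^2=36ags$), and the roots would come out as $\beta_2=\tfrac{3by\pm x}{2a}$, giving the divisibility condition $2a\mid 3by\pm x$ rather than $6a\mid 9by\pm x$. So, as written, your chain of equivalences does not yield statement 3 of the theorem. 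The fix is immediate: with the correct denominator $81g^2$, the perfect-square condition reads $x^2+3\Delta y^2=108ags$ upon setting $x=9g\sqrt{\delta}$ and $y=\beta_3$, and the quadratic formula then gives $\beta_2=\tfrac{9by\pm x}{6a}$, exactly as in the paper's proof; with this one correction the rest of your argument, including the treatment of $\beta_1=1-\tfrac{2a}{3}y$, goes through unchanged.
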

\begin{proof}
\begin{enumerate}
We know that $I_W(\mathbb{Z}[\alpha])=54g$. In this case, we have that $\beta$ is an $\mathfrak{A}_H(\mathbb{Z}[\alpha])$-generator of $\mathbb{Z}[\alpha]$ if and only if \begin{equation}\label{eq:mainthm3}
\Big(\beta_1+\frac{2a}{3}\beta_3\Big)\Big(\frac{a}{3g}\beta_2^2-\frac{b}{g}\beta_2\beta_3+\frac{a^2}{9g}\beta_3^2\Big)=\pm1,
\end{equation} which is equivalent to the solvability of the equation $$\frac{a}{3g}\beta_2^2-\frac{b}{g}\beta_2\beta_3+\frac{a^2}{9g}\beta_3^2=s$$ on $\beta_2$ and $\beta_3$, where $s\in\{-1,1\}$. We regard it as an equation on $\beta_2$ with parameter $\beta_3$, so it has discriminant $$\delta=\frac{-3\Delta\beta_3^2+108ags}{81g^2}.$$ This is a perfect square if and only if there are $x,y\in\mathbb{Z}$ such that $x^2+3\Delta y^2=108ags$. If so, the equation \eqref{eq:mainthm3} is only satisfied for $\beta_1=1-\frac{2a}{3}y$, $\beta_2=\frac{9by\pm x}{6a}$ and $\beta_3=y$. Moreover, $\beta_2$ is an integer if and only if $6a\mid 9by+x$ or $9by-x$.
\end{enumerate}
\end{proof}

The validity of Theorem \ref{thm:maintheorem} is established from Theorems \ref{thm:maintheorem1}, \ref{thm:maintheorem2} and \ref{thm:maintheorem3}.

\subsection{The case $\mathcal{O}_L=\mathbb{Z}[\alpha]$}

When the $\mathbb{Z}$-order $\mathbb{Z}[\alpha]$ in $L$ is the full ring of integers $\mathcal{O}_L$, the criteria above refer to the $\mathfrak{A}_H$-freeness of $\mathcal{O}_L$. In this part, we characterize when we have such an equality and specialize Theorem \ref{thm:maintheorem} to this situation.

Alaca \cite{alaca} found the explicit form of an integral basis of $L$ in terms of $\alpha$. His description is based on considering an analogous problem for each prime number $p$, namely to find a basis of $K$ that also serves as a basis for all elements of $K$ that do not admit a power of $p$ as a denominator (where $K$ is viewed as the field of fractions of its ring of integers). These are the so-called $p$-integral bases.

\begin{defi} Let $p$ be a prime number. 
\begin{enumerate}
    \item An element $x\in L$ is said to be $p$-integral if each prime ideal $\mathfrak{p}$ of $\mathcal{O}_L$ over $p$ appears with non-negative exponent at the factorization of the fracional ideal $\alpha\mathcal{O}_L$.
    \item A $p$-integral basis of $L$ is a $\mathbb{Q}$-basis $B=\{\omega_j\}_{j=1}^3$ of $L$ such that each $\omega_j$ is $p$-integral and for each $p$-integral element $x\in L$ there are unique $p$-integral elements $x_j\in\mathbb{Q}$ such that $x=\sum_{j=1}^3x_j\omega_j$.
\end{enumerate}
\end{defi}

In \cite[Theorem 1.3]{alaca}, Alaca shows that $p$-integral bases for $L$ for all prime numbers $p$ can be used to build an integral basis for $L$. In particular, we deduce the following.

\begin{coro} If $\{1,\alpha,\alpha^2\}$ is a $p$-integral basis for $L$ for all prime numbers $p$, then it is also an integral basis for $L$.
\end{coro}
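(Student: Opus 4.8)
The plan is to argue directly from the characterization of the ring of integers as the intersection of all local integrality conditions, so that the conclusion follows from the defining property of a $p$-integral basis together with the elementary fact that $\bigcap_p\mathbb{Z}_{(p)}=\mathbb{Z}$; this is essentially the content of \cite[Theorem 1.3]{alaca} specialized to the situation where a single $\mathbb{Q}$-basis serves simultaneously for every prime. First I would record the two standard facts I intend to use: an element $x\in L$ lies in $\mathcal{O}_L$ if and only if it is $p$-integral for every prime $p$ (equivalently $v_{\mathfrak{p}}(x)\geq0$ for all finite primes $\mathfrak{p}$), and a rational number belongs to $\mathbb{Z}$ if and only if it is $p$-integral for every $p$.

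Next I would establish the easy inclusion. Since $\alpha$ is a root of the monic integer polynomial $f$, each of $1,\alpha,\alpha^2$ is an algebraic integer and hence lies in $\mathcal{O}_L$; in particular it is $p$-integral for every $p$. Thus $\langle1,\alpha,\alpha^2\rangle_{\mathbb{Z}}\subseteq\mathcal{O}_L$, and it only remains to prove the reverse inclusion.

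For the reverse inclusion I would take an arbitrary $x\in\mathcal{O}_L$ and write it in the fixed $\mathbb{Q}$-basis as $x=x_1+x_2\alpha+x_3\alpha^2$ with $x_1,x_2,x_3\in\mathbb{Q}$ uniquely determined. Fixing a prime $p$, the element $x$ is $p$-integral, so by the hypothesis that $\{1,\alpha,\alpha^2\}$ is a $p$-integral basis there exist $p$-integral rationals expressing $x$ in this basis; the crucial point is that uniqueness of the expansion of $x$ with respect to the $\mathbb{Q}$-basis $\{1,\alpha,\alpha^2\}$ forces these $p$-integral coefficients to coincide with $x_1,x_2,x_3$. Letting $p$ range over all primes then shows that each $x_i$ is $p$-integral for every $p$, whence $x_i\in\mathbb{Z}$. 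This gives $\mathcal{O}_L\subseteq\langle1,\alpha,\alpha^2\rangle_{\mathbb{Z}}$ and completes the argument.

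The step I expect to require the most care is the reconciliation of the local and global coefficients: one must be sure that the \emph{$p$-integral coefficients} furnished by the definition of a $p$-integral basis are genuinely the same rationals $x_i$ for every $p$, rather than $p$-dependent expansions. This is guaranteed precisely because $\{1,\alpha,\alpha^2\}$ is a single $\mathbb{Q}$-basis of $L$, so the coordinates of $x$ are uniquely determined over $\mathbb{Q}$ and the local bases merely certify that these fixed coordinates are integral at each $p$; no genuine gluing of distinct local data is needed, which is why the conclusion is so immediate in this special case.
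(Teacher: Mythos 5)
Your proof is correct, but it takes a genuinely different route from the paper. The paper gives no self-contained argument at all: it deduces the corollary as a special case of Alaca's Theorem 1.3 \cite{alaca}, which is a general gluing statement asserting that (possibly different) $p$-integral bases for each prime $p$ can be combined to construct an integral basis of $L$. Your argument bypasses that theorem entirely: you use the local-global characterizations $\mathcal{O}_L=\{x\in L:\ x\ \text{is $p$-integral for all }p\}$ and $\bigcap_p\mathbb{Z}_{(p)}=\mathbb{Z}$, together with the key observation that the coordinates of $x\in\mathcal{O}_L$ with respect to the single $\mathbb{Q}$-basis $\{1,\alpha,\alpha^2\}$ are unique, so the $p$-integral coefficients furnished by the $p$-integral basis property at each prime are necessarily these same fixed rationals; no gluing of distinct local data is needed when one basis works at every prime. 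What your route buys is self-containedness and the clarification that this corollary requires far less than the full strength of Alaca's construction; what the paper's route buys is brevity, since the reference to \cite{alaca} is needed anyway for the explicit tables invoked afterwards. One minor remark: for the inclusion $\mathbb{Z}[\alpha]\subseteq\mathcal{O}_L$ you appeal to $\alpha$ being a root of a monic integer polynomial, which is true in the paper's setting, but you could instead note that the hypothesis already gives this, since a $p$-integral basis consists of $p$-integral elements, so each of $1,\alpha,\alpha^2$ is $p$-integral for every $p$ and hence lies in $\mathcal{O}_L$; this keeps the proof valid under the corollary's bare hypothesis.
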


Moreover, in \cite[Tables A, B and C]{alaca}, an explicit form of a $p$-integral basis for $L$ is given in terms of conditions depending on $a$, $b$. Looking at the conditions for which we obtain $p$-integral basis $\{1,\alpha,\alpha^2\}$, we can characterize when this is actually an integral basis for $L$.

\begin{coro}\label{coro:ordereqring} The field $L$ admits integral basis $\{1,\alpha,\alpha^2\}$ (that is, $\mathcal{O}_L=\mathbb{Z}[\alpha]$) if and only if for each prime $p$, exactly one of the given conditions at each item holds:
\begin{enumerate}
    \item\label{coro:ordereqring1} If $p=2$, \begin{enumerate}
        \item $b\equiv1\,(\mathrm{mod}\,2)$,
        \item $a\equiv0\,(\mathrm{mod}\,2)$ and $b\equiv2\,(\mathrm{mod}\,4)$,
        \item $a\equiv3\,(\mathrm{mod}\,4)$ and $b\equiv0\,(\mathrm{mod}\,4)$,
        \item $a\equiv1\,(\mathrm{mod}\,4)$ and $b\equiv2\,(\mathrm{mod}\,4)$.
    \end{enumerate}
    \item\label{coro:ordereqring2} If $p=3$, \begin{enumerate}
        \item $v_3(a)=0$,
        \item $v_3(a)\geq1$ and $v_3(b)=1$,
        \item $v_3(a)\geq1$, $a\not\equiv3\,(\mathrm{mod}\,9)$, $v_3(b)=0$ and $b^2\not\equiv a+1\,(\mathrm{mod}\,9)$,
        \item $a\equiv3\,(\mathrm{mod}\,9)$, $v_3(b)=0$ and $b^2\not\equiv4\,(\mathrm{mod}\,9)$.
    \end{enumerate}
    \item\label{coro:ordereqring3} If $p>3$, \begin{enumerate}
        \item $v_p(a)=0$ and $v_p(b)\geq1$,
        \item $v_p(a)\geq1$ and $v_p(b)\leq 1$,
        \item $v_p(a)=v_p(b)=0$ and $v_p(\Delta)\leq1$.
    \end{enumerate}
\end{enumerate}
\end{coro}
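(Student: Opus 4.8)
The plan is to prove Corollary~\ref{coro:ordereqring} by combining Alaca's classification of $p$-integral bases with the preceding corollary, which reduces the global statement $\mathcal{O}_L=\mathbb{Z}[\alpha]$ to the simultaneous condition that $\{1,\alpha,\alpha^2\}$ is a $p$-integral basis for every prime $p$. Since $p$-integrality is a local condition at each $p$, the proof factors as a prime-by-prime analysis: for each prime $p$, I extract from \cite[Tables A, B and C]{alaca} precisely those rows where the tabulated $p$-integral basis equals $\{1,\alpha,\alpha^2\}$, and translate the accompanying hypotheses on $a$ and $b$ into the congruence and valuation conditions listed in items~\ref{coro:ordereqring1}, \ref{coro:ordereqring2} and \ref{coro:ordereqring3}. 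The word ``exactly'' in the statement signals that these conditions are mutually exclusive and exhaustive among the basis-$\{1,\alpha,\alpha^2\}$ cases, so I must also verify that the listed subcases are pairwise disjoint.

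\textbf{First I would} handle the generic primes $p>3$, which is item~\ref{coro:ordereqring3} and the cleanest case. Here Alaca's Table~C expresses the $p$-integral basis in terms of $v_p(a)$, $v_p(b)$ and $v_p(\Delta)$, and the power basis $\{1,\alpha,\alpha^2\}$ is $p$-integral exactly when $p^2\nmid\operatorname{disc}(f)/\operatorname{disc}(L)$ at $p$, i.e.\ when $p$ does not divide the index $[\mathcal{O}_L:\mathbb{Z}[\alpha]]$. The three subcases (a)--(c) correspond to the three valuation patterns under which no denominator $p$ is introduced; I would read these off directly and check disjointness, noting that (a) forces $v_p(a)=0$, (b) forces $v_p(a)\geq1$, and (c) forces $v_p(a)=0$ together with $v_p(b)=0$, so (a) and (c) are separated by $v_p(b)$ while (b) is separated by $v_p(a)$.

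\textbf{Next} I would treat $p=3$ (item~\ref{coro:ordereqring2}) and $p=2$ (item~\ref{coro:ordereqring1}), which are the genuinely delicate primes because $3$ and $2$ divide the relevant resultants and ramification data of the trinomial $x^3-ax+b$. For these I would consult Tables~A and B, isolate the rows whose basis is the power basis, and carefully transcribe the congruences modulo $4$ and modulo $9$ (together with the conditions $b^2\not\equiv a+1$ and $b^2\not\equiv4\pmod 9$) that single them out. The subcase structure for $p=2$ partitions according to the parity of $b$ and the residues of $a$ and $b$ modulo $4$, and for $p=3$ according to $v_3(a)$, $v_3(b)$ and the residue of $a$ modulo $9$; in each case I would confirm that the listed alternatives are incompatible with one another, so that the ``exactly one'' assertion holds.

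\textbf{The hard part will be} faithfully matching the layout of Alaca's tables, whose case divisions are organized by intermediate quantities (such as the index or auxiliary residues) rather than by the final conditions stated here, to the clean form given in the corollary; in particular I must make sure that the conditions $b^2\not\equiv a+1\pmod 9$ and $b^2\not\equiv4\pmod 9$ in item~\ref{coro:ordereqring2} capture exactly the wildly ramified subcases at $3$ in which the power basis still works, and that the standing hypothesis $v_p(a)\leq2$ or $v_p(b)\leq3$ on the minimal polynomial is never violated within the enumerated cases. Since this is entirely a bookkeeping translation from a published classification, I would present it as a direct appeal to \cite[Tables A, B and C]{alaca} followed by the disjointness verification, rather than reproving Alaca's local computations.
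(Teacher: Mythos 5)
Your proposal is correct and matches the paper's (implicit) argument exactly: the paper also deduces this corollary by combining the preceding corollary on $p$-integral bases with a prime-by-prime reading of \cite[Tables A, B and C]{alaca}, extracting the rows where the tabulated $p$-integral basis is $\{1,\alpha,\alpha^2\}$. The disjointness check you add for the ``exactly one'' clause is a reasonable piece of diligence, but the substance is the same bookkeeping translation the paper relies on.
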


Combining Theorem \ref{thm:maintheorem} with Corollary \ref{coro:ordereqring}, we obtain criteria for the freeness of $\mathcal{O}_L$ as an $\mathfrak{A}_H$-module for some particular cases.

\begin{coro}\label{coro:freenesscubic} Let $L=\mathbb{Q}(\alpha)$ be a cubic number field with $\alpha$ root of a polynomial as in \eqref{eq:polynf} and let $H$ be its Hopf-Galois structure. Then $\mathcal{O}_L$ is $\mathfrak{A}_H$-free if and only if there exist integers $x,y\in\mathbb{Z}$ such that:
\begin{enumerate}
    \item $x^2+3\Delta y^2=\pm12ag$, $6a\mid 9by+x$ or $9by-x$ and $3\nmid y$, if $3\nmid a$ and Corollary \ref{coro:ordereqring} \ref{coro:ordereqring1}, \ref{coro:ordereqring3} hold.
    \item $x^2+3\Delta y^2=\pm36ag$ and $6a\mid 9by+x$ or $9by-x$, if $v_3(a)=v_3(b)=1$ and Corollary \ref{coro:ordereqring} \ref{coro:ordereqring1}, \ref{coro:ordereqring3} hold.
    \item $x^2+3\Delta y^2=\pm108ag$ and $6a\mid 9by+x$ or $9by-x$, if $v_3(a)>v_3(b)$ and Corollary \ref{coro:ordereqring} \ref{coro:ordereqring1}, \ref{coro:ordereqring2}, \ref{coro:ordereqring3} hold.
\end{enumerate}
Moreover, the sufficient conditions at each statement are equivalent to $\mathcal{O}_L=\mathbb{Z}[\alpha]$.
\end{coro}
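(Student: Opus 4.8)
The plan is to read this corollary as the specialization of Theorem~\ref{thm:maintheorem} to the locus where $\mathcal{O}_L=\mathbb{Z}[\alpha]$. The key observation is that whenever $\mathcal{O}_L=\mathbb{Z}[\alpha]$ the two associated orders coincide, $\mathfrak{A}_H(\mathbb{Z}[\alpha])=\mathfrak{A}_H(\mathcal{O}_L)=\mathfrak{A}_H$, so that $\mathbb{Z}[\alpha]$ being $\mathfrak{A}_H(\mathbb{Z}[\alpha])$-free is \emph{verbatim} the assertion that $\mathcal{O}_L$ is $\mathfrak{A}_H$-free. Hence, once the hypothesis $\mathcal{O}_L=\mathbb{Z}[\alpha]$ is in force, each of the three equations and divisibility conditions in Theorem~\ref{thm:maintheorem} transfers directly to the ring of integers, and the entire content of the corollary is to rewrite the single hypothesis ``$\mathcal{O}_L=\mathbb{Z}[\alpha]$'' -- equivalently, items \ref{coro:ordereqring1}, \ref{coro:ordereqring2}, \ref{coro:ordereqring3} of Corollary~\ref{coro:ordereqring} holding simultaneously -- in the language of the case split on $v_3(a)$ versus $v_3(b)$ that governs Theorem~\ref{thm:maintheorem}. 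The bulk of the proof is therefore a bookkeeping comparison at the prime $3$ between the three regimes of the theorem and the four subcases of item~\ref{coro:ordereqring2}.

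First I would dispose of the two regimes where the $3$-adic condition becomes automatic. If $3\nmid a$, i.e.\ $v_3(a)=0$, then the first subcase of item~\ref{coro:ordereqring2} holds outright, while its subcases (b)--(d) all require $v_3(a)\geq1$; thus item~\ref{coro:ordereqring2} is satisfied for free, and $\mathcal{O}_L=\mathbb{Z}[\alpha]$ reduces to items \ref{coro:ordereqring1} and \ref{coro:ordereqring3} alone, producing statement~1. For the theorem's middle regime $3\mid a$, $v_3(a)\leq v_3(b)$, imposing $\mathcal{O}_L=\mathbb{Z}[\alpha]$ excludes subcase (a) (since $v_3(a)\geq1$) and subcases (c), (d) (since these force $v_3(b)=0$, incompatible with $v_3(b)\geq v_3(a)\geq1$); only subcase (b) survives, yielding $v_3(b)=1$ and hence $v_3(a)=1$. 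Conversely $v_3(a)=v_3(b)=1$ is exactly subcase (b), so item~\ref{coro:ordereqring2} again holds automatically and $\mathcal{O}_L=\mathbb{Z}[\alpha]$ collapses to items \ref{coro:ordereqring1} and \ref{coro:ordereqring3}. This identifies the theorem's middle regime, under $\mathcal{O}_L=\mathbb{Z}[\alpha]$, with the sharp condition $v_3(a)=v_3(b)=1$ of statement~2.

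The remaining regime $v_3(a)>v_3(b)$ is where I expect the genuine obstacle, and the decisive point is that item~\ref{coro:ordereqring2} can no longer be dropped. Indeed, $v_3(a)>v_3(b)$ is simultaneously compatible with subcase (b) (taking $v_3(b)=1$, $v_3(a)\geq2$) and with subcases (c), (d) (taking $v_3(b)=0$), yet it does not by itself force any of the supplementary congruences $a\not\equiv3$, $b^2\not\equiv a+1$, or $b^2\not\equiv4$ modulo $9$ that distinguish when $\{1,\alpha,\alpha^2\}$ is actually a $3$-integral basis. The careful part is thus to verify that no inequality between $v_3(a)$ and $v_3(b)$ can substitute for these congruences, so that all three items \ref{coro:ordereqring1}, \ref{coro:ordereqring2}, \ref{coro:ordereqring3} must be retained verbatim, giving statement~3. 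Finally, since the three regimes on $(v_3(a),v_3(b))$ partition every admissible pair $(a,b)$ and each has just been shown to encode $\mathcal{O}_L=\mathbb{Z}[\alpha]$ together with its defining inequality, the disjunction of the three bundles is precisely $\mathcal{O}_L=\mathbb{Z}[\alpha]$; this is the ``moreover'' clause, and the associated equations are inherited case by case from Theorem~\ref{thm:maintheorem}.
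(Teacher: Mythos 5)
Your proposal is correct and follows essentially the same route as the paper: specialize the three freeness theorems behind Theorem~\ref{thm:maintheorem} to the locus $\mathcal{O}_L=\mathbb{Z}[\alpha]$, use Corollary~\ref{coro:ordereqring} to characterize that equality, and observe that item~\ref{coro:ordereqring2} is automatic when $3\nmid a$, collapses to $v_3(a)=v_3(b)=1$ when $3\mid a$ and $v_3(a)\leq v_3(b)$, and must be kept in full when $v_3(a)>v_3(b)$. Your only addition is making explicit the identification $\mathfrak{A}_H(\mathbb{Z}[\alpha])=\mathfrak{A}_H$ under $\mathcal{O}_L=\mathbb{Z}[\alpha]$, which the paper leaves implicit.
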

\begin{proof}
From Corollary \ref{coro:ordereqring}, $\mathcal{O}_L=\mathbb{Z}[\alpha]$ if and only if Corollary \ref{coro:ordereqring} \ref{coro:ordereqring1}, \ref{coro:ordereqring2}, \ref{coro:ordereqring3} hold. Together with Theorem \ref{thm:maintheorem3}, this gives immediately the third statement.

If $3\nmid a$, the freeness of $\mathbb{Z}[\alpha]$ as an $\mathfrak{A}_H(\mathbb{Z}[\alpha])$-module is given by Theorem \ref{thm:maintheorem1}.  Now, this assumption is just the first condition at Corollary \ref{coro:ordereqring} \ref{coro:ordereqring2}, so it is automatically satisfied. This proves the first statement.

If $3\mid a$ and $v_3(a)\leq v_3(b)$, the freeness of $\mathbb{Z}[\alpha]$ as an $\mathfrak{A}_H(\mathbb{Z}[\alpha])$-module is given by Theorem \ref{thm:maintheorem2}. In this case,  Corollary \ref{coro:ordereqring} \ref{coro:ordereqring2} holds if and only if $v_3(a)=v_3(b)=1$, proving the second statement.
\end{proof}

\section*{Acknowledgements}

This work was supported by the grant PID2022-136944NB-I00 (Ministerio de Ciencia e Innovación), and by Czech Science Foundation, grant 24-11088O.

\printbibliography

\end{document}